\DeclarePairedDelimiterX{\inp}[2]{\langle}{\rangle}{#1, #2} 
\algnewcommand{\Inputs}[1]{%
  \State \textbf{Inputs:}
  \Statex \hspace*{\algorithmicindent}\parbox[t]{.8\linewidth}{\raggedright #1}
}
\algnewcommand{\Outputs}[1]{%
  \State \textbf{Outputs:}
  \Statex \hspace*{\algorithmicindent}\parbox[t]{.8\linewidth}{\raggedright #1}
}
\algnewcommand{\Initialize}[1]{%
  \State \textbf{Initialize:}
  \Statex \hspace*{\algorithmicindent}\parbox[t]{.8\linewidth}{\raggedright #1}
}
\acrodef{ADMM}[ADMM]{\emph{Alternating Direction Method of Multipliers}}
\acrodef{DMD}[DMD]{\emph{Dynamic Mode Decomposition}}
\acrodef{RDMD}[RDMD]{\emph{Rescaled Dynamic Mode Decomposition}}
\acrodef{FGD}[FGD]{\emph{Fastest Gradient Descent}}
\acrodef{TV}[TV]{\emph{Total Variation}}
\acrodef{BV}[BV]{\emph{Bounded Variation}}
\acrodef{SVD}[SVD]{\emph{Singular Vector Decomposition}}
\acrodef{KMD}[KMD]{\emph{Koopman Mode Decomposition}}
\acrodef{KEF}[KEF]{\emph{Koopman Eigenfunction}}
\acrodef{PDE}[PDE]{\emph{Partial Differential Equation}}
\newcommand{\off}[1]{{\color{yellow}{[removed]}}}
\begin{document}

\title{Total-Variation - Fast Gradient Flow and Relations to Koopman Theory \thanks{We acknowledge suuport by grant agreement No. 777826 (NoMADS), by the Israel Science Foundation (Grant No.  534/19) and by the Ollendorff Minerva Center.}
}


\author{Ido Cohen         \and Tom Berkov \and 
        Guy Gilboa}


\institute{I. Cohen \at
              Tel.: +927-4-829-5743\\
              \email{idoc@campus.technion.ac.il} 
           \and
           T. Berkov \at
              \email{ptom@campus.technion.ac.il} 
              \and
           G. Gilboa \at
              \email{guy.gilboa@ee.technion.ac.il}
}

\date{\today}

\maketitle

\begin{abstract}
The space-discrete \ac{TV} flow is analyzed using several mode decomposition techniques.
In the one dimensional case, we provide analytic formulations to \ac{DMD} and to \ac{KMD} of the \ac{TV} flow and compare the obtained modes to \ac{TV} spectral decomposition. 
We propose a computationally efficient algorithm to evolve the one-dimensional \ac{TV} flow. A significant speedup by three orders of magnitude is obtained, compared to iterative minimizations.  
A common theme, for both mode analysis and fast algorithm, is the significance of phase transitions during the flow, in which the subgradient changes. 

We explain why applying \ac{DMD} directly on \ac{TV}-flow measurements cannot model the flow or extract modes well. We formulate 
a more general method for mode decomposition that coincides with the modes of \ac{KMD}. This method is based on the linear decay profile, typical to \ac{TV}-flow. 
These concepts are demonstrated through experiments, where additional extensions to the two-dimensional case are given. 

\keywords{Isotropic and Anisotropic \acl{TV}\and \acl{TV}-flow  \and \acl{TV}-spectral decomposition \and \acl{DMD} \and Time Reparametrization \and \acl{KMD}}
\end{abstract}
\newpage
{\bf{List of abbreviations}}
\addcontentsline{toc}{chapter}{List of Symbols}
\begin{longtable}{lp{0.6\textwidth}}
\acs{ADMM}&\acl{ADMM}\\
\acs{DMD}&\acl{DMD}\\
\acs{FGD}&\acl{FGD}\\
\acs{KEF}&\acl{KEF}\\
\acs{KMD}&\acl{KMD}\\
\acs{PDE}&\acl{PDE}\\
\acs{RDMD}&\acl{RDMD}\\
\acs{SVD}&\acl{SVD}\\
\acs{TV}&\acl{TV}\\
  \end{longtable}
  
{\bf{List of variables}}
\addcontentsline{toc}{chapter}{List of Symbols}
\begin{longtable}{lp{0.7\textwidth}}
$P(\cdot)$& a proper nonlinear operator\\
$\psi(t)$&The solution of a nonlinear PDE\\
$t$&The time variable\\
$t(\psi)$& The inverse function from the observations $\psi$ back to the time $t$\\
$\varphi_{\mathcal{K}}(\psi)$&\acf{KEF}\\
$\bm{v}_i$&A Koopman mode\\
$a_{\lambda_i}(t)$ & The decay profile typical to the dynamical system\\

$J_{TV}$&Total variation\\
$J_{iso}$&Isotropic total variation\\
$J_{ani}$&Anisotropic total variation\\
$T_i$&The time point at which the subgradient of TV-flow changes for the $i$th time.\\
$p$& a negative subgradient\\
$p_i$& The negative subgradient of the TV-flow for $t\in [T_i,T_{i+1})$\\
$p^i$&The negative subgradient in the $i$th extremum of the signal\\
$p^{(i)}$&The value of the $i$th pixel in the negative subgradient $p$\\

$\tau$&The reparametrized time variable \\
$\varphi_i$&\ac{TV} spectral component\\
$\xi_1,\xi_2$&Modes resulting from the rescaled \ac{TV}-flow\\
$\phi_1,\phi_2$&Modes resulting from \ac{DMD} applied on the rescaled \ac{TV}-flow\\
  \end{longtable}  
\section{Introduction}
Dimensionality reduction is one of the most challenging tasks in data analysis. For flows, the general aim is to find a low dimensional representation of the spatial data (modes) and the corresponding temporal evolution \cite{zhang2020evaluating}. 
This allows much better understanding and facilitates fast and efficient processing of the signal. 
The spatio-temporal decomposition is commonly used in many disciplines such as fluid dynamics, signal analysis and computational physics, to name a few. In this work, we bridge between the \ac{TV} spectral decomposition from signal processing and its connection to Koopman theory and its applications. 

In order to study nonlinear dynamics,  Koopman theory \cite{koopman1931hamiltonian} is  increasingly employed. It allows a linear representation of nonlinear flows , also referred to as  \ac{KMD} \cite{mezic2005spectral}. Since in the general case the representation is infinite-dimensional, in practice, a finite dimensional approximation is used.  In fluid dynamics, \acf{DMD} is a data-driven method to approximate \ac{KMD} \cite{schmid2010dynamic}. Since \ac{DMD} approximates a linear dynamical system it can be viewed as an exponential data fitting algorithm \cite{askham2018variable}. \ac{DMD} has become a popular and practical analysis tool, however, it  suffers from some inherent flaws \cite{cohen2020introducing,cohen2021examining,rosenfeld2021singular,elmore2021learning}. These drawbacks are emphasized in zero-homogeneous evolutions such as the \ac{TV} flow.

The \ac{TV} functional has been extensively used in image processing and computer vision, as it allows the preservation of edges (discontinuities). The respective steepest descent, termed \ac{TV}-flow, has been studied thoroughly in the last two decades \cite{tvFlowAndrea2001,steidl2004equivalence,steidl2005relations,brox2003equivalence,chambolle2004algorithm,bonforte2012total,tvFlowAndrea2001,bellettini2002total}. A list of the main attributes of \ac{TV} flow in a one-dimensional signal is summed up in \cite{brox2006tv}. 
Two of its main attributes are that the flow decays piecewise linearly and that it has a finite support in time. The above studies lay the foundation to the \ac{TV} nonlinear spectral framework (spectral \ac{TV}) \cite{gilboa2013spectral,gilboa2014total,burger2016spectral,gilboa2016nonlinear,bungert2021nonlinear,brokman2021nonlinear,fumero2020nonlinear}. A main challenge in accurately using \ac{TV} flow is the computational load in calculating the subgradient. For an accurate calculation, often a convex optimization problem  (such as  \cite{chambolle2004algorithm}) is solved at each step. In order to obtain a full spectral \ac{TV} decomposition of a signal, one has to compute the entire \ac{TV} flow until extinction. This motivates us to explore fast methods to accomplish the task.

In this work, we apply the theory of Koopman operator and its applications on the \ac{TV}-flow. We begin by formulating the analytic solution of \ac{TV}-flow for one-dimensional signals and propose a fast algorithm to solve the flow. 
We first examine \ac{RDMD} \cite{cohen2020modes} which was recently proposed as an improved version of \ac{DMD} for homogeneous flows (such as $p$-Laplacian flows \cite{kuijper2007p}, 
where the operator of the dynamics is  homogeneous of degree $p-1$). However, since there are subgradient phase transitions in \ac{TV}-flow, \ac{RDMD} is limited. To address this problem we suggest a new piecewise mode decomposition technique. It is based on the linear decay profile typical to the flow. We show it coincides with \ac{KMD} and can perfectly model the dynamics through piecewise linear segments. We show the direct relations of the obtained modes to spectral \ac{TV} decomposition \cite{gilboa2014total}. 
A shorter version of this work was first presented in a conference  \cite{cohen2021Total}. 
Here, additional theory and experiments are shown, and a two-dimensional extension is presented.


\section{Preliminaries}
In this section, we summarize the definitions and methods relevant to this work.

\subsection{\aclp{KEF}}
Let us consider the following nonlinear dynamical system
\begin{equation}
	\frac{d}{dt}\psi=P(\psi),
\end{equation}
where $P(\cdot)$ is a proper nonlinear operator. The observation $\psi$ belongs to $\mathbb{R}^M$. We also denote the time derivative with the subscript $\{\cdot\}_t$ for simplicity.

Let $g:\mathbb{R}^M \to \mathbb{R}$ be a measurement of $\psi$. Bernard Osgood Koopman argued that a Hamiltonian system has measurements evolving linearly under the dynamical regime \cite{koopman1931hamiltonian}. These measurements, termed as \acfp{KEF}, admit the following relation, 
\begin{equation}
\frac{d}{dt}	\varphi_{\mathcal{K}}(\psi(t))=\lambda \varphi_{\mathcal{K}}(\psi(t)),
\end{equation}
where $\varphi(\psi)$ is a \ac{KEF} and $\lambda$ is the respective eigenvalue. 
Recently, part of the authors formulated necessary and sufficient conditions for the existence of such measurements \cite{cohen2021examining}. It was shown that if the solution of the system, $\psi(t)$, is injective then the measurement $\varphi$ gets the form of,
\begin{equation}\label{eq:KEF}
	\varphi_{\mathcal{K}}(\psi)=\varphi_{\mathcal{K}}(\psi(0))e^{\lambda t(\psi)},
\end{equation}
where $t(\psi)$ is the inverse function from the observations $\psi$ back to the time $t$. For further details see \cite{cohen2021examining}.

\subsection{\acl{KMD}}
\acf{KMD} is a representation of dynamical systems based on \acp{KEF} \cite{mezic2005spectral}. Namely, the state space $\psi$ can be expressed as,
\begin{equation}
	\psi(t)=\sum_{i=1}^\infty\bm{v}_i\varphi_{\mathcal{K}_i}(t),
\end{equation}
where $\varphi_i(t)$ is a \ac{KEF} and $\bm{v}_i$ is the corresponding vector, referred to as Koopman mode. When the dynamic is nonlinear the decomposition may be infinite. In practice, a finite approximation method is used. The most common one is \ac{DMD}.

\subsection{\acl{DMD}}
\acf{DMD} is a data-driven algorithm to approximate \ac{KMD}. Given a set of samples of the dynamic, a linear relation is formulated that approximates  the generating process of the set. Commonly, the formulation is performed in a lower dimensional space. The output is triplets representing the main spatial structures (modes), their amplitudes (coefficients), and the respective time changes (eigenvalues). To recap, the three main steps of this algorithm are: 1. \emph{Dimensionality reduction}, 2.  \emph{Optimal linear mapping}, and 3. \emph{System reconstruction} . The output is \emph{modes, eigenvalues and coefficients} \cite{schmid2010dynamic}.

\subsection{Decay profile profile decomposition with Koopman modes}\label{subsec:DecayProfileKMD}
We summarize below a different decomposition method, suggested in \cite{cohen2021examining}, which is relevant to this work.
Let us assume that the dynamical system can be formulated as, 
\begin{equation}\label{eq:DPMD}
\psi(t)=\sum_{i=1}^{L}\bm{v}_ia_{\lambda_i}(t),
\end{equation}
where $a_{\lambda_i}(t)$ is the decay profile typical to the dynamical system and characterized by a parameter (or set of parameters) $\lambda_i$ and $\bm{v}_i$ is a spatial structure. In matrix notations,
\begin{equation}
\psi(t)= V\bm{{a}_{\lambda}}(t),
\end{equation}
where $\bm{a}_{\bm{\lambda}}(t)=\begin{bmatrix}a_{\lambda_1}(t)&\cdots&a_{\lambda_L}(t)\end{bmatrix}^T$ and $V$ is a matrix with the modes as its columns.
If the decay profile is monotone, the inverse mapping exists and can be expressed as,
\begin{equation}
\bm{t}(\psi) = \bm{a}_{\bm{\lambda}}^{-1}\left((V^TV)^{-1}V^T\psi\right),
\end{equation}
where $\bm{a}_{\bm{\lambda}}^{-1}(\cdot)=\begin{bmatrix}a_{\lambda_1}^{-1}(\cdot)&\cdots&a_{\lambda_L}^{-1}(\cdot)\end{bmatrix}^T$. Note that $\bm{t}$ is a vector of inverse mapping. Therefore, the Koopman eigenfunctions are,
\begin{equation}
\bm{\varphi_{\mathcal{K}}}(\psi) = \exp\{\bm{a}_{\bm{\lambda}}^{-1}\left((V^TV)^{-1}V^T\psi\right)\},
\end{equation}
where $\bm{a}_{\bm{\lambda}}^{-1}(\cdot)=\begin{bmatrix}a_{\lambda_1}^{-1}(\cdot)&\cdots&a_{\lambda_L}^{-1}(\cdot)\end{bmatrix}^T$. 

The general mode decomposition algorithm is data-driven. We mention here only the part in the algorithm revealing the spatial structures. For the rest of the algorithm, we refer the reader to \cite{cohen2021examining}. Given the data matrix, $\Psi$, we initialize the decay profile dictionary, $\mathcal{D}$. The columns of the matrix  are samples of the decay vector $\bm{a_\lambda}(t)$. We minimize the expression,
\begin{equation}
\norm{\Psi-VD}^2_F
\end{equation}
over $V$, where $V$ is sparse columns-wise (the substrict $F$ denotes the Frobenius norm) . Then, we construct the matrices $\hat{V}$ and $\hat{D}$. $\hat{V}$ contains the non-zero (or non-negligible) modes (vectors in $V$) and $\hat{D}$ contains the respective decay profiles. 
Note that, the mode set $\{\bm{v}_i\}$ is proved to be the set of Koopman modes \cite{cohen2021examining}.

\subsection{\acl{TV} Spectral Decomposition}\label{sec:tv_intro}



\subsubsection{\ac{TV} functional}
\paragraph{One dimensional signal.}
The \ac{TV} functional of a one dimensional function is defined by,

\begin{equation}
    J_{TV}(\psi)=\inp{\abs{\nabla \psi}}{1},\quad \psi\in \mathbb{R}^M,
\end{equation}
where $\nabla$ is the discrete gradient operator. (for more details we refer the reader to \cite{chambolle2010introduction}).
\paragraph{Two dimensional signal.} For a two dimensional function, the definitions of isotropic and anisotropic \ac{TV}, respectively, are,
\begin{equation}
\begin{split}
    J_{iso}(\psi)&=\inp{\norm{\nabla \psi}_2}{1}=\inp{\sqrt[]{\abs{D_x \psi}^2+\abs{D_y \psi}^2}}{1},\quad \psi\in\mathbb{R}^{M\times K}\\
    J_{ani}(\psi)&=\inp{\norm{\nabla\psi}_1}{1}=\inp{\abs{D_x \psi}+\abs{D_y \psi}}{1},\quad \psi\in\mathbb{R}^{M\times K},
\end{split}
\end{equation}
where $\nabla = [D_x, D_y]^T$ is the discrete gradient operator and $D_x,D_y$ denote the differential operators according to the Cartesian coordinate system
(see e.g. \cite{lou2015weighted} for a new model combining these two functionals).

\subsubsection{\ac{TV}-flow}
The \ac{TV}-flow is the gradient descent flow of the \ac{TV} functional, 
\begin{equation}\label{eq:ss} \tag{{\bf{TV-flow}}}
\begin{split}
    \psi_t&=p,\quad \psi(t=0)=f,
\end{split}
\end{equation}
where $-p$ belongs to the subdifferential, $-p \in \partial J_{TV}(\psi)$, defined by,
\begin{equation}
    \partial J_{TV}(\psi)=\{p | J(\theta) - J(\psi)\ge \inp{-p}{\theta-\psi}, \forall \theta\in \mathcal{H}\},
\end{equation}
and $\mathcal{H}$ is either $\mathbb{R}^M$ or $\mathbb{R}^{M\times K}$, depending on the dimensionality of $\theta$.
A nonlinear eigenfunction, $v$, of $p$ admits,
\begin{equation}\label{eq:EF}\tag{\bf{EF}}
    p(v)=\lambda\cdot v,
\end{equation}
for some non-positive $\lambda\in \mathbb{R}^-$.
The solution of Eq. \eqref{eq:ss} initialized with an eigenfunction $v$ is,
\begin{equation}
    \psi(t) = \left(1+\lambda t\right)^+\cdot v,
\end{equation}
where $\lambda$ is the corresponding eigenvalue and $(a)^+:=\max\{a,0\},\, \forall a\in \mathbb{R}$.

\subsubsection{\ac{TV} spectral framework}
The spectral decomposition of a signal, $f\in \mathcal{H}$, related to the eigenfunctions of $P$ is based on the solution of Eq. \eqref{eq:ss}. The definition of the \ac{TV} transform is given by \cite{gilboa2014total},

\begin{equation}\label{eq:oneTrans}
    \mathcal{G}(t) = t \frac{d^2}{dt^2}\psi(t), 
\end{equation}
where $\psi(t)$ is the solution of \eqref{eq:ss}. The function $\mathcal{G}(t)$ is the spectral component of the signal $f$ at time $t$.
For example, the transform of an eigenfunction is,
\begin{equation}\label{eq:TVtransformEF}
\begin{split}
    \mathcal{G}(t) = f\cdot t\lambda^2\cdot\delta(1+\lambda\cdot t),
\end{split}
\end{equation}
where $\delta(\cdot)$ is the Dirac measure.
We list below some \emph{Attributes} of the semi discrete one-dimensional TV-flow and the TV spectral components:
\begin{enumerate}
    \item The subgradient is piecewise constant with respect to $t$ (see e.g. \cite{burger2016spectral}). \label{Att:piecewise}
    \item The initial condition can be reconstructed by knowing the subgradient as a function of $t$ (by integration). \label{Att:recons}
    \item The flow splits into merging events \cite{brox2006tv}. \label{Att:merging}
    \item The average of a subgradient over the spatial variable is zero.\label{Att:Average}
    \item The spectrum is a finite set of delta functions, where each delta function represents a spectral component \cite{burger2016spectral}.\label{Att:FiniteDelta}
    \item For a given $f$, the spectral component set is orthogonal \cite{burger2016spectral}.\label{Att:orthogonal}
    \item Two adjacent points which become equal in value during the flow, will not separate \cite{steidl2004equivalence,bellettini2002total}. \label{Att:adjacent}
\end{enumerate}

{\bf{Settings:}} In this work we first note that \ac{DMD} is fully discrete (time and space) whereas \ac{TV}-flow and spectral \ac{TV} are semi-discrete (time-continuous, spatially discrete). Thus, in order to apply \ac{DMD} on a gradient descent flow we first need to sample (uniformly) with respect to the time variable $t$. In all cases we use Euclidean inner product and norm.

\section{One dimensional \texorpdfstring{\ac{TV}}{TEXT} flow, \texorpdfstring{\ac{DMD}}{TEXT}, Koopman eigenfunctions and modes }\label{sec:DMDTV}
\subsection{Optimal calculation of one dimensional \texorpdfstring{\ac{TV}-flow }{TEXT}}

Let us formulate \emph{Attribute \ref{Att:piecewise}} and \emph{Attribute \ref{Att:recons}} more formally.
The solution of \eqref{eq:ss} converges to a steady state in finite time. In this finite time, the solution is divided into $L$ disjoint segments, $\{[T_i,T_{i+1})\}_{i=0}^{L-1}$. In each segment, the subgradient is constant, 
\begin{equation}
-p_i\in \partial J(\psi(t)), t\in[T_i,T_{i+1}),
\end{equation}
where for $t>T_L$ it is zero, $p_{L+1}=0$. The solution can be expressed by (e.g. \cite{burger2016spectral}),
\begin{equation}\label{eq:GilboaSolu}
    \psi(t) = \psi(T_i)+(t-T_i)p_i,\quad t\in[T_i,T_{i+1}).
\end{equation}
We propose here a fast algorithm to find a subgradient of the \ac{TV} functional of a one-dimensional signal. The results of this algorithm coincide with the subgradient calculated in \cite{steidl2004equivalence}. 

\subsubsection{Calculating a subgradient}
While there has been ongoing research on fast methods for TV regularization (e.g. \cite{cherkaoui2020fast_tv_nips,darbon2006image,goldfarb2009parametric}), few advances were made in fast algorithms of the TV-flow, which is required for computing spectral TV. 
Our proposed solution, $\psi(t)\in \mathbb{R}^M \times [0,T_L]$, is in a semi-discrete setting. The algorithm is based on the \ac{TV}-flow attributes listed at the end of Section \ref{sec:tv_intro}. The \acf{FGD} flow directly stems from the works \cite{brox2006tv,brox2003equivalence,steidl2004equivalence}. We assume here Neumann boundary condition (generalization to other boundary conditions is possible). 

We consider an expression for the value of \ac{TV} of a piecewise monotone signal $f$. Let the set $\{f_i\}_{i=1}^N$ be the local extremum points of the signal and let $\{m_i\}_{i=1}^N$ be the number of pixels at every local extremum. Then, the \ac{TV} value of $f$ is,
\begin{equation}
    J_{TV} = \sum_{i=1}^N a_if_i,
\end{equation}
where $a_1$ and $a_N$ are,
\begin{equation}\label{eq:a1L}
a_1 = 
    \begin{cases}
    1&f_1 \textrm{ is a maximum}\\
    -1&f_1 \textrm{ is a minimum}
    \end{cases}\quad
    a_N = 
    \begin{cases}
    1&f_N \textrm{ is a maximum}\\
    -1&f_N \textrm{ is a minimum}
    \end{cases},
\end{equation}
and for the rest of the indices,
\begin{equation}\label{eq:ai}
a_i = 
    \begin{cases}
    2&f_i \textrm{ is a maximum}\\
    -2&f_i \textrm{ is a minimum},
    \end{cases}
\end{equation}
where by maximum or minimum we refer to the local notions.
The value of \ac{TV} can be calculated also as the inner product between the subgradient $-p$ 
and the signal $f$. In addition, we know that the subgradient is zero when the signal is monotone and two adjacent and equal pixels do not separate (Attribute \ref{Att:adjacent}). Then, $J_{TV}$ of $f$ can be calculated by,
\begin{equation}
    J_{TV} = -\inp{p}{f}=-\sum_{i=1}^N p^im_if_i,
\end{equation}
where $p^i$ is the negative subgradient of the $i$th extremum point. By variation of parameters we get,
\begin{equation}\label{eq:subgradient}
    -p^i=\frac{a_i}{m_i}.
\end{equation}

In Algo. \ref{algo:FastSubgradient} we summarize the steps to find the subgradient. 
\begin{algorithm}[phtb!] \caption{Fast subgradient calculation}
\begin{algorithmic}[1]
        \Inputs {$f$}
		\State{Find the extrema points of $f$ and their respective number of  pixels, $\{m_i\}$.}
		\State{Find the coefficients $\{a_i\}$ in Eqs. \eqref{eq:a1L} and \eqref{eq:ai}.}
		\State{Calculate the negative subgradient, $p$, according to Eq. \eqref{eq:subgradient}.}
		\Outputs {The negative subgradient $p$}
    \end{algorithmic}
    \label{algo:FastSubgradient}
\end{algorithm}
In Fig. \ref{fig:subgradient} we illustrate Algo. \ref{algo:FastSubgradient}. We calculate the subgradients (right column) of two piecewise monotone signals (left column). The upper signal has one pixel at any extremum and the lower signal has $m_i$ pixels at the $i$th extremum.

\begin{figure}[phtb!]
\centering
\captionsetup[subfigure]{justification=centering}
\begin{subfigure}[t]{0.49\textwidth}
\centering
    \includegraphics[trim=0 0 0 0, clip,width=1\textwidth,valign = t]{./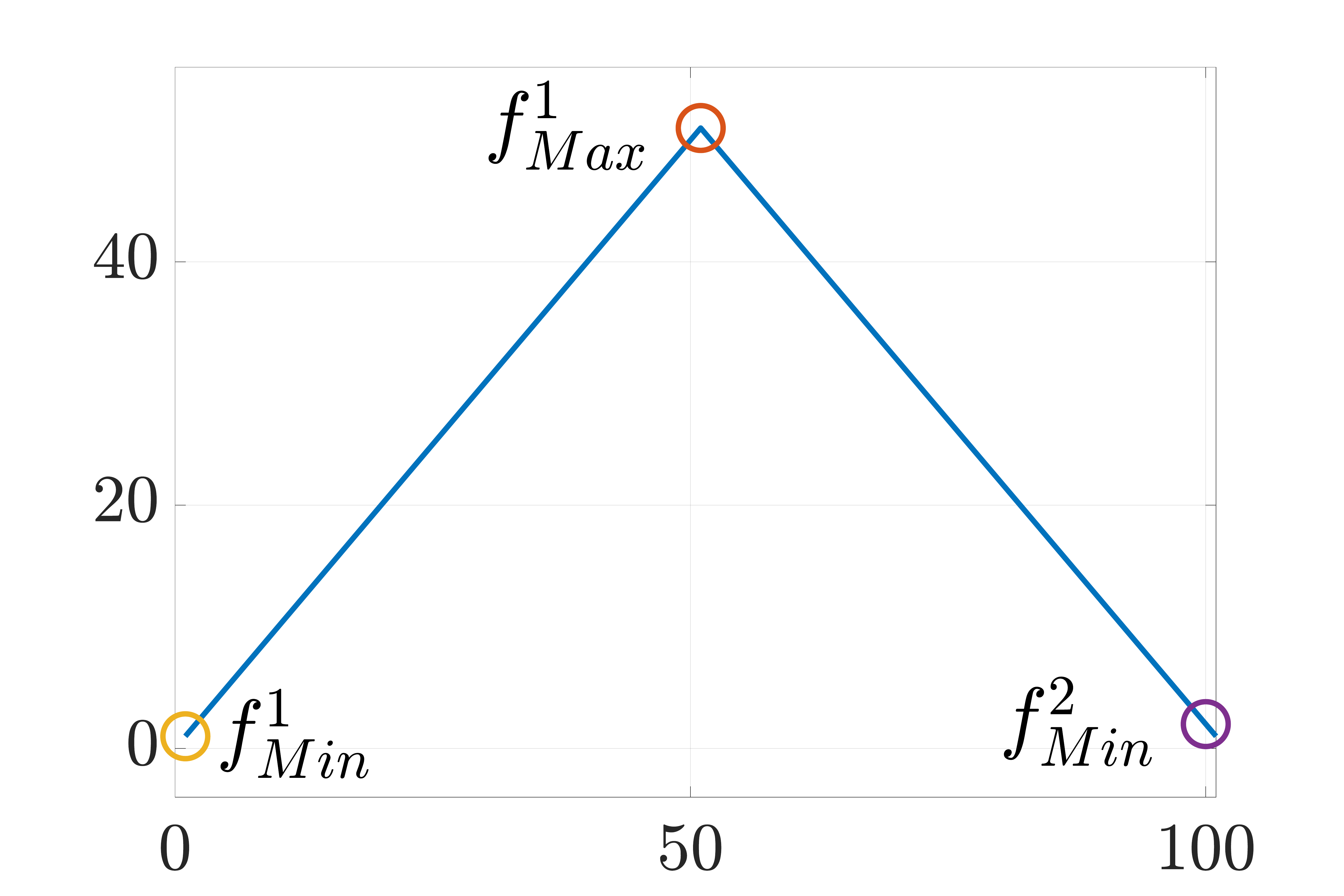}
    \caption{piecewise strictly monotone signal}
    \label{subfig:piecewiseMonotone}
\end{subfigure}
\begin{subfigure}[t]{0.49\textwidth} 
\centering
        \includegraphics[trim=0 0 0 0, clip,width=1\textwidth,valign = t]{./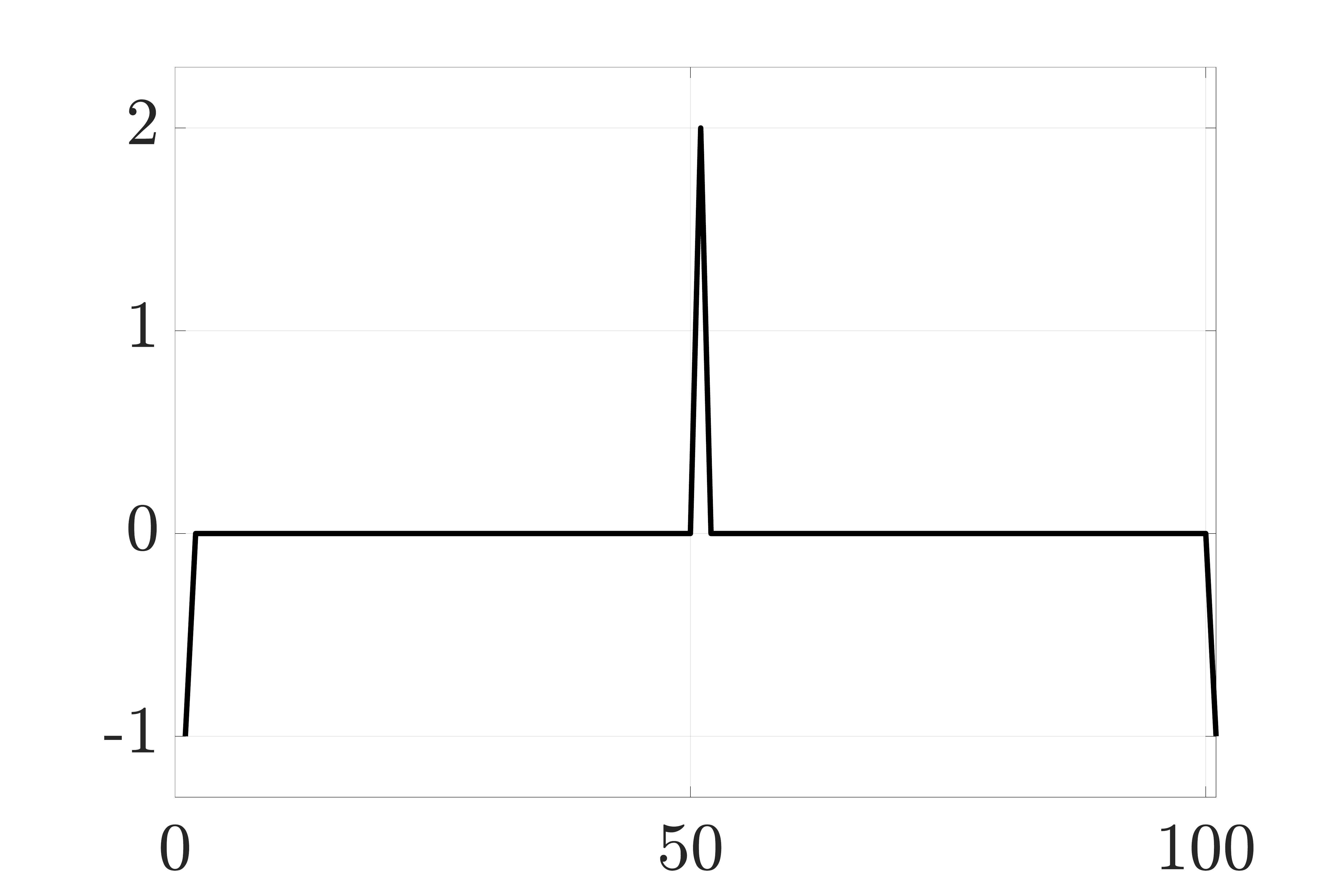}
        \caption{A subgradient of the signal in (a)}
        \label{subfig:subgradientPiecewiseMonotone}
    \end{subfigure}\\
    \begin{subfigure}[t]{0.49\textwidth}
\centering
    \includegraphics[trim=0 0 0 0, clip,width=1\textwidth,valign = t]{./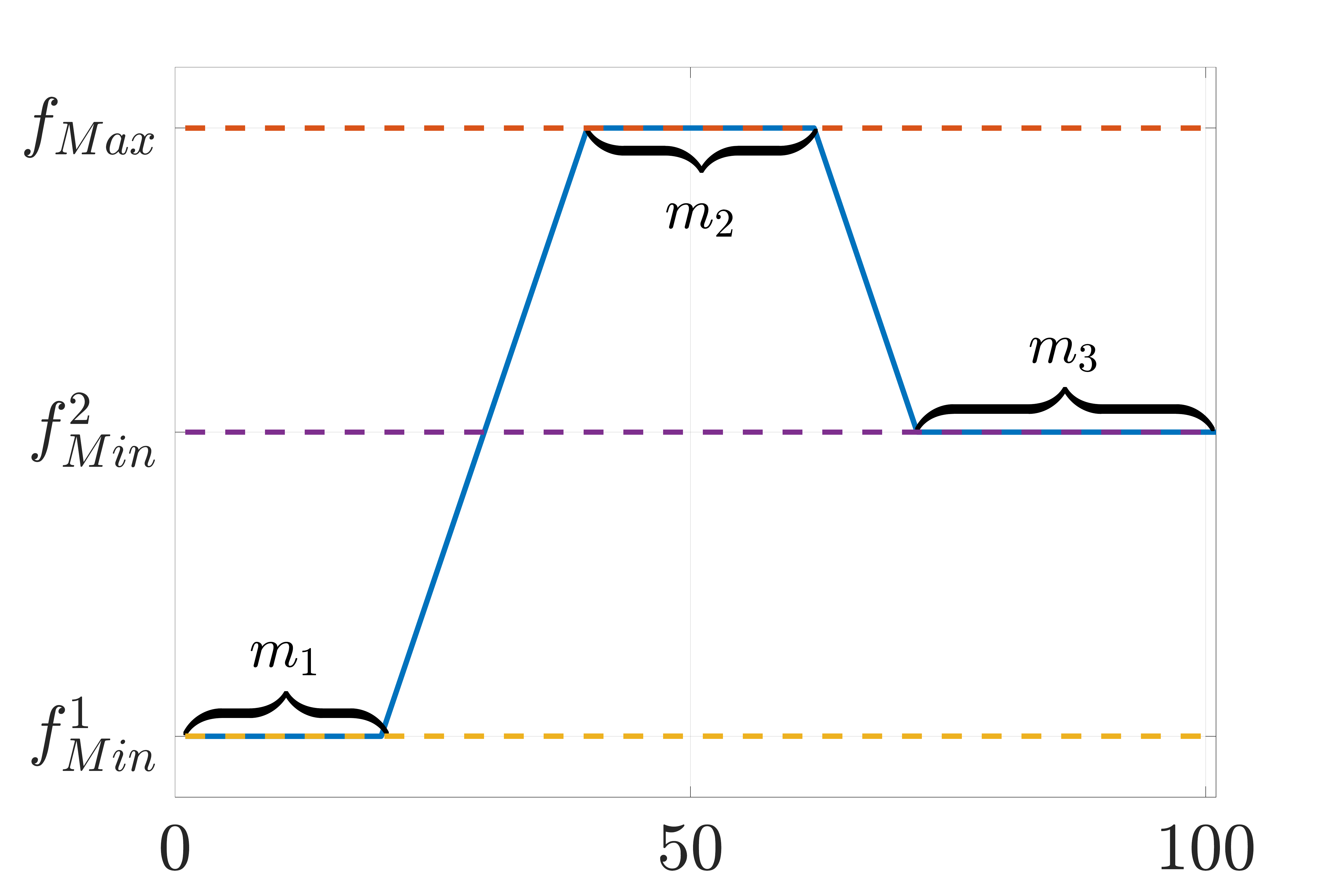}
    \caption{piecewise monotone signal}
    \label{subfig:NSpiecewiseMonotone}
\end{subfigure}
\begin{subfigure}[t]{0.49\textwidth} 
\centering
        \includegraphics[trim=0 0 0 0, clip,width=1\textwidth,valign = t]{./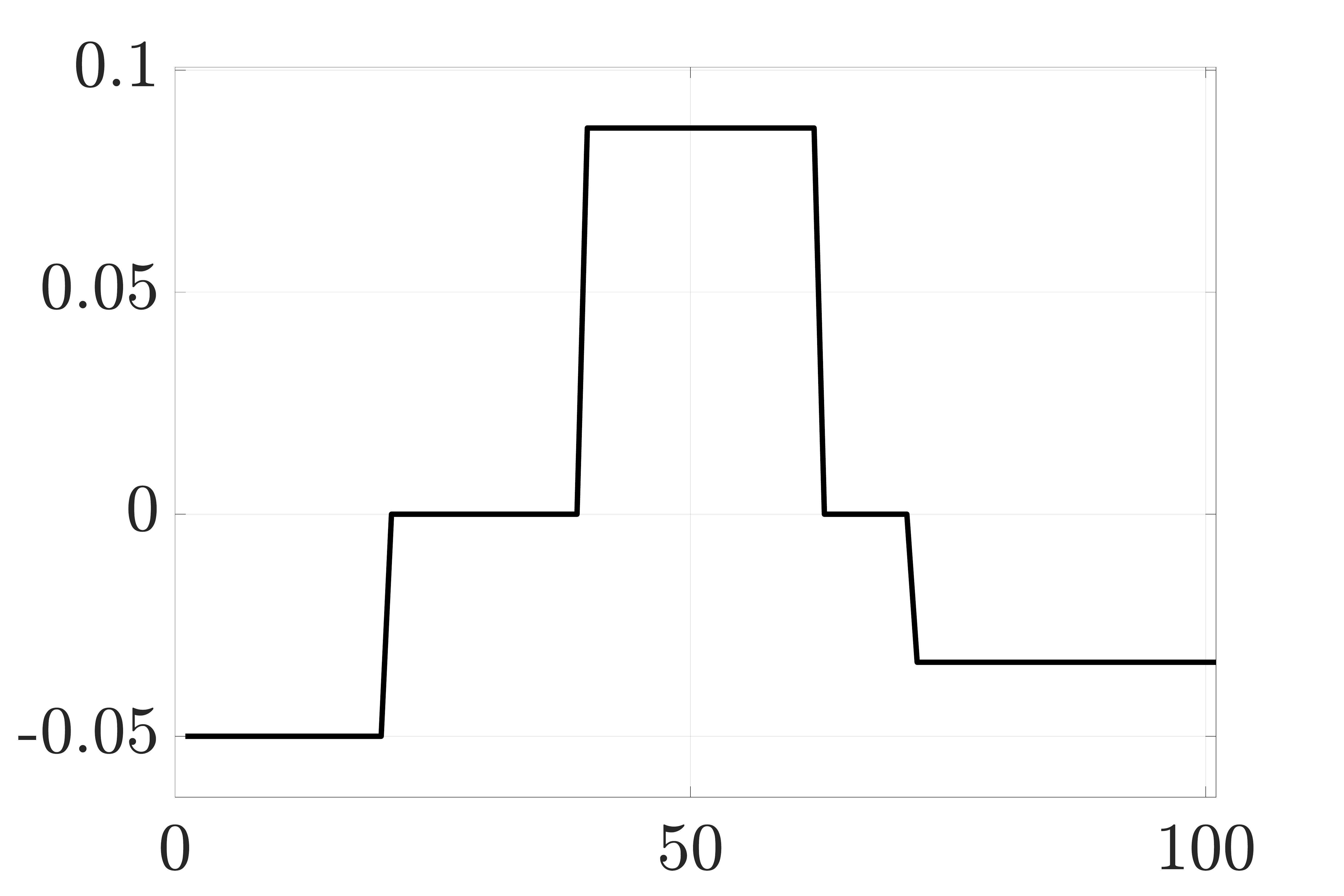}
        \caption{A subgradient of the signal in (c)}
        \label{subfig:subgradientNSPiecewiseMonotone}
    \end{subfigure}
     \caption{{\bf{Piecewise monotonic signals}} - (a) a piecewise monotone signal with one pixel at each extremum (b) its subgradient (c) a piecewise monotone signal (d) its subgradient. }
    \label{fig:subgradient}
\end{figure}

Note that the subgradient depends on  the relation between the pixels. Namely, the subgradient is valid as long as the number of pixels, $\{m_i\}_{i=1}^N$, and their respective extrema attribute (minimum or maximum) do not change.

\subsubsection{Subgradient Updating} 
Let $\mathcal{T}=\{T_i\}_{i=0}^L$ be the time points at which the subgradient changes, where $T_0=0$, and $T_L$ is the extinction time. These time points are when the relation among the extrema points changes. Equivalently, when the number of pixels in the extrema changes. An intersection between two adjacent pixels or number of pixels are termed \emph{merging event}. From \emph{Attributes} \ref{Att:merging} and \ref{Att:adjacent} we can find these time points and compute the updated subgradient. 

{\bf{Merging event prediction:}} The discrete gradient approximation is $\nabla \psi^{(j)}:=\psi^{(j+1)}-\psi^{(j)}= 0$, where $\psi^{(j)}$ is the entry $j$ of the vector $\psi$. Using Eq. \eqref{eq:GilboaSolu}, the very next merging event after $T_i$ can be calculated by
\begin{equation}\label{eq:transitiontimePoint}
    T_{i+1} = T_i+\min_{j\in\mathcal{J}^*}\{-\nabla \psi^{(j)}(T_i) / \nabla p^{(j)}(\psi(T_i))\},
\end{equation}
where 
\begin{equation*}
\mathcal{J}^*=\{j\,\,s.t.\,\, 0<-\nabla \psi^{(j)}(T_i) / \nabla p^{(j)}(\psi(T_i))<\infty\}.    
\end{equation*}

{\bf{Subgradient updating:}} According to \emph{Attribute} \ref{Att:adjacent}, the merged entries evolve together at the same pace. In addition, the subgradient at other locations is unchanged 
Since the average of the subgradient is zero (\emph{Attribute} \ref{Att:Average}), the subgradient of the merged entries is the average of the previous subgradient at these entries.
Merging event prediction and subgradient updating are detailed below and concisely formalized in Algorithm \ref{algo:FastTV}.

\begin{algorithm}[phtb!] \caption{Accelerated \ac{TV} flow}
\begin{algorithmic}[1]
		\Inputs{$f$} 
		\State {\bf{Initialize:}} $\psi_0\leftarrow f$,  $t\leftarrow 0$, $\mathcal{T}= \emptyset$, and $\mathcal{P}= \emptyset$
		\State{Calculate the negative subgradient, $p_0$, of $f$ by invoking Algo. \ref{algo:FastSubgradient}, add $p_0$ to the set $\mathcal{P}$.}
		\While{$\norm{p_i}>0$}
		\State Find the next time transition point, $T_{i+1}$ (Eq. \eqref{eq:transitiontimePoint}).
		\State $\psi_{i+1}\leftarrow \psi_i+ (T_{i+1}-T_{i}) \cdot p_i$. 
		\State \label{State:clusters}Find clusters, $\{\mathcal{M}_k\}_{k=1}^r$, where $\abs{\nabla \psi_{i+1}}=0$
		\State Update the next negative subgradient, $p_{i+1}$, such that 
		$p_{i+1}(\mathcal{M}_k)=\frac{1}{\abs{\mathcal{M}_k}}\sum_{\mathcal{M}_k}p_i(\mathcal{M}_k),\quad k=1,\cdots,r.$
		\State Add $p_{i+1}$ to the set $\mathcal{P}$; Add $T_{i+1}$ to the set $\mathcal{T}$. 
		\EndWhile
		\State {\bf{Outputs:}} $\mathcal{T},\mathcal{P}$. The set $\mathcal{P}$ contains the negative subgradient sequence and the set $\mathcal{T}$ contains the transition times. Use Eq. \eqref{eq:GilboaSolu} to get $\psi(t)$ for $t\in [0,T_L]$.
    \end{algorithmic}
    \label{algo:FastTV}
\end{algorithm}

\begin{remark}[\ac{TV} flow - physical alegory]\label{remark:TV1Dim}
For one dimensional signals, the \ac{TV} flow initiates every pixel with an initial velocity. Then, the rest of the flow is a series of pure plastic collisions of the pixels. Thus, the velocity of the center of mass is preserved and equals zero.
\end{remark}

\subsubsection{Closed form solution}
Let us assume an initial condition, $\psi(0)=f$, is orthogonal to the kernel of $J_{TV}$ (constant functions). The reconstruction of $f$ from the set of 
negative
subgradients $\{p_i\}$ is \cite{burger2016spectral}
\begin{equation}\label{eq:GilboaRecon},
    f = \psi(0) = \sum_{i=1}^L T_i(p_{i+1}-p_{i}).
\end{equation}
\begin{proposition}[Linear decay]\label{prop:closed} The solution of \eqref{eq:ss} is a sum of spectral components decaying linearly. More formally, if the initial condition, $f$, is orthogonal to the kernel set of $P$ then the solution of Eq. \eqref{eq:ss} is
\begin{equation}\label{eq:closedFormSolutionTV}
    \psi(t) = \sum_{i=1}^{L}\left(1+\lambda_i t\right)^+\varphi_i,\,\,\textrm{where}\,\,
    \lambda_i=-T_i^{-1}\,\, \textrm{and}\,\, \varphi_i=\frac{p_{i}-p_{i+1}}{\lambda_i}.
\end{equation}
\end{proposition}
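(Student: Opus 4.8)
The plan is to verify the claimed representation directly, segment by segment, rather than by superposition: because \eqref{eq:ss} is nonlinear, one cannot simply add the solutions generated by the individual eigenfunctions, and the identity holds only because the negative subgradient is piecewise constant and updates by merging. I would begin by rewriting the two defining quantities in a more convenient form. Since $\lambda_i=-T_i^{-1}$, the temporal factor becomes $(1+\lambda_i t)^+=(1-t/T_i)^+$, which is affine on $[0,T_i)$ and vanishes identically for $t\ge T_i$. Likewise the definition $\varphi_i=(p_i-p_{i+1})/\lambda_i$ gives the clean identity $\lambda_i\varphi_i=p_i-p_{i+1}$, which will carry the entire differentiation step.

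Next I would fix a segment $t\in[T_j,T_{j+1})$ and identify the active terms of the sum as exactly those indices $i$ with $T_i>t$; all remaining terms are killed by the $(\cdot)^+$. Differentiating the candidate on the open interval, each surviving term contributes $\lambda_i\varphi_i=p_i-p_{i+1}$, and summing over the active indices telescopes (using the convention $p_{L+1}=0$) to the single constant negative subgradient that \eqref{eq:GilboaSolu} prescribes on that segment. This shows the candidate has the correct piecewise-constant velocity everywhere. Continuity across the transition points is immediate, since every term $(1-t/T_i)^+\varphi_i$ is continuous and the kink of the $i$-th term sits precisely at $t=T_i\in\mathcal{T}$; hence the candidate is a continuous, piecewise-affine function whose slopes coincide with those of the true solution.

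It then remains only to pin down the constant of integration, which I would do at $t=0$. There every factor equals one, so the candidate evaluates to $\sum_{i=1}^L\varphi_i=\sum_{i=1}^L T_i(p_{i+1}-p_i)$, which is exactly the reconstruction formula \eqref{eq:GilboaRecon} for $f$. This is where the hypothesis $f\perp\ker P$ enters: orthogonality to the constants is what makes \eqref{eq:GilboaRecon} valid (equivalently, every subgradient has zero spatial mean, \emph{Attribute} \ref{Att:Average}), and it guarantees that the simultaneous vanishing of all terms for $t\ge T_L$ correctly reproduces the extinction state. Matching value at a single point together with matching derivative on every interval forces the candidate to equal $\psi(t)$ on all of $[0,T_L]$ by the fundamental theorem of calculus.

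The main obstacle I anticipate is purely bookkeeping: aligning the index range of the active set with the telescoping sum so that the surviving slope is the subgradient attached to the correct interval (the $(\cdot)^+$ boundaries and the endpoint convention $p_{L+1}=0$ must be tracked carefully). Beyond the identity itself, to justify calling the $\varphi_i$ \emph{spectral components} I would feed the representation into the transform \eqref{eq:oneTrans}: differentiating twice in time and multiplying by $t$ sends each affine-then-flat term to a Dirac mass at $t=T_i=-\lambda_i^{-1}$ weighted by $\varphi_i$, reproducing the eigenfunction transform \eqref{eq:TVtransformEF} and confirming that $\{\varphi_i\}$ is the finite delta spectrum of \emph{Attribute} \ref{Att:FiniteDelta}.
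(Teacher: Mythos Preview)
Your proof is correct and shares its algebraic core with the paper's argument: both hinge on the reconstruction identity \eqref{eq:GilboaRecon} at $t=0$ and the telescoping $\sum_{i\ge k}(p_i-p_{i+1})=p_k$ (with $p_{L+1}=0$) on each segment. The only difference is orientation---the paper starts from \eqref{eq:GilboaSolu}, substitutes \eqref{eq:GilboaRecon}, and pushes the algebra forward by induction on the segment index to arrive at the closed form, whereas you start from the candidate formula and verify it has the correct piecewise velocity and initial value; this sidesteps the explicit induction but is otherwise the same computation read backwards.
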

\begin{proof}

Let us reformulate the solution, Eq. \eqref{eq:GilboaSolu} for the first time range $t\in[0,T_1]$. Substituting Eq. \eqref{eq:GilboaRecon} into Eq. \eqref{eq:GilboaSolu}, we have
\begin{equation*}
    \begin{split}
        \psi(t) &= \psi(0)+(t-0)p_1=\sum_{i=1}^L T_i(p_{i+1}-p_{i}) + tp_1\\
        &\quad\underbrace{=}_{p_{L+1}=0}\,\,\sum_{i=1}^L T_i(p_{i+1}-p_{i}) + t\sum_{i=1}^L(p_{i}-p_{i+1})
        =\sum_{i=1}^{L}(T_i-t)(p_{i+1}-p_{i}).
    \end{split}
\end{equation*}
Therefore, 
\begin{equation}
\psi(T_1)=\sum_{i=1}^{L}(T_i-T_1)(p_{i+1}-p_{i})=\sum_{i=2}^{L}(T_i-T_1)(p_{i+1}-p_{i}).
\end{equation}
In a similar manner, we can  reformulate the solution for $t\in[T_1,T_2)$ as, 
\begin{equation}
    \psi(t) =    \sum_{i=2}^{L}(T_i-t)(p_{i+1}-p_{i}).
\end{equation}
By induction, the general solution is,
\begin{equation}
   \psi(t) = \sum_{i=1}^{L}(T_i-t)^+(p_{i+1}-p_{i}).
\end{equation}
Denoting $\lambda_i=-T_i^{-1}$, this can be expressed as, 
\begin{equation*}
    \begin{split}
        \psi(t) &=\sum_{i=1}^{L}(-\lambda_i^{-1}-t)^+(p_{i+1}-p_{i})=\sum_{i=1}^{L}(1+\lambda_i t)^+\frac{p_{i}-p_{i+1}}{\lambda_i}.\quad \Box
    \end{split}
\end{equation*}
\end{proof}
The spectral  decomposition is computed by second order time derivative of $\psi(t)$ \cite{gilboa2014total}, 
\begin{equation}\label{eq:TV1D}
    \mathcal{G}(t) = \sum_{i=1}^L \varphi_i\cdot t\lambda_i^2\cdot\delta(1+\lambda_i\cdot t).
\end{equation}
This coincides with \emph{Attribute \ref{Att:FiniteDelta}}. A fast algorithm to find the \ac{TV} spectral decomposition is proceeding Algo. \ref{algo:FastTV}. After finding the sets $\mathcal{P}$ and $\mathcal{T}$ we can calculate the spectral components $\{\varphi_i\}$ by Eq. \eqref{eq:closedFormSolutionTV}.

The flow can also be defined (without using the operator $(\cdot)^+$) in  disjoint time intervals,
\begin{equation}\label{eq:inInterSolu}
    \psi(t) = \sum_{i=k}^{L}\left(1+\lambda_i t\right)\varphi_i, \quad \forall t\in[T_{k-1},T_k).
\end{equation}
We will use this formulation later in our analysis.

\subsection{Rescaled-\ac{DMD}}
We follow the work of \cite{cohen2020modes} where an analysis of \ac{DMD} was carried out for flows based on homogeneous operators. The homogeneity order dictates not only the decay profile but also the support in time of the solution. In particular, TV-flow decays linearly and has a finite extinction time. However, a flow linearization algorithm, such as \ac{DMD}, can be interpreted as an exponential data fitting algorithm \cite{askham2018variable} resulting in functions with infinite support. This contradiction yields an inherent error in the dynamic reconstruction by \ac{DMD}. In \cite{cohen2020modes} it was suggested to solve this problem  by time reparameterization. Introducing a new time variable $\tau$, Eq. \eqref{eq:ss} is time rescaled by the flow,
\begin{equation}\label{eq:ress2}\tag{{\bf{R-TV-flow}}}
    \psi_\tau=G(\psi)=-\frac{\inp{p}{\psi}}{\norm{p}^2}p\quad,-p\in \partial J_{TV}(\psi).
\end{equation}
Note that, $G(a\psi)=aG(\psi),\,\forall a\in\mathbb{R}$, i.e. $G$ is a one-homogeneous operator. In addition, a \ac{TV} eigenfunction is an eigenfunction of $G$, however, this eigenfunction decays exponentioally under the dynamics \eqref{eq:ress2}. Therefore, this flow rescales only the time axis whereas the spatial axis remains unchanged. Using \eqref{eq:ss} and \eqref{eq:ress2}, the relation between $t$ and $\tau$ can be derived by, 
\begin{equation*}
\begin{split}
    \frac{d}{d\tau}\psi(t(\tau))&=-\frac{\inp{p}{\psi(t(\tau))}}{\norm{p}^2}p\\
    &=-\frac{\inp{p}{\psi(t(\tau))}}{\norm{p}^2}\frac{d}{dt}\psi(t(\tau)),
\end{split}
\end{equation*}
yielding,
\begin{equation}\label{eq:ress}
    \frac{d}{d\tau}t(\tau)=-\frac{\inp{p}{\psi(t(\tau))}}{\norm{p}^2}.
\end{equation}
This ODE gets a different form in each segment, $[T_{k-1},T_k)$. 
Substituting  Eq. \eqref{eq:inInterSolu} in Eq. \eqref{eq:ress}, we have
\begin{equation*}
    \frac{d}{d\tau}t(\tau)=-\frac{\inp{\sum_{i=k}^L\lambda_i\varphi_i}{\sum_{i=k}^L\left(1+\lambda_i t(\tau)\right)\varphi_i}}{\norm{\sum_{i=k}^L\lambda_i\varphi_i}^2}=-\frac{\sum_{i=k}^L\lambda_i\norm{\varphi_i}^2}{\sum_{i=k}^L\lambda_i^2\norm{\varphi_i}^2}-t(\tau).
\end{equation*}
The solution is,
\begin{equation}\label{eq:ODESolu}
    t(\tau)=a_ke^{-\tau} -c_k,\quad c_k=\frac{\sum_{i=k}^L\lambda_i\norm{\varphi_i}^2}{\sum_{i=k}^L\lambda_i^2\norm{\varphi_i}^2},
\end{equation}
where $a_k$ depends on the initial conditions of every segment such that $t(\tau)$ is continuous (where $t(0)=0$). Then, the time points $\{T_i\}_{i=1}^L$ are mapped to $\{\tau_i\}_{i=1}^L$, accordingly.

\begin{proposition}[Main TV-flow modes]\label{prop:Main}
In every disjoint $k$th interval, $[\tau_{k-1},\tau_k)$, the solution of time reparametrizing \eqref{eq:ss}, Eq. \eqref{eq:ress2}, has two main orthogonal modes, $\xi_1^k,\xi_2^k$, with eigenvalues zero and minus one.
\end{proposition}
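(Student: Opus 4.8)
The plan is to make the rescaled flow \eqref{eq:ress2} explicit as a function of the reparametrized time $\tau$ on a single segment and then read the two modes directly off its functional form. I would start from the interval representation of the solution, Eq. \eqref{eq:inInterSolu}, namely $\psi(t)=\sum_{i=k}^{L}(1+\lambda_i t)\varphi_i$ for $t\in[T_{k-1},T_k)$, and substitute the time change $t(\tau)=a_k e^{-\tau}-c_k$ from Eq. \eqref{eq:ODESolu}. Expanding each factor as $1+\lambda_i t(\tau)=(1-\lambda_i c_k)+\lambda_i a_k e^{-\tau}$ and separating the $\tau$-independent part from the part proportional to $e^{-\tau}$ gives
\[
\psi(t(\tau))=\underbrace{\sum_{i=k}^{L}(1-\lambda_i c_k)\varphi_i}_{\xi_1^k}\;+\;e^{-\tau}\underbrace{a_k\sum_{i=k}^{L}\lambda_i\varphi_i}_{\xi_2^k}.
\]

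The two temporal profiles appearing here are the constant $e^{0\cdot\tau}$ and $e^{-\tau}$. In the Koopman picture these are exactly the eigenfunction profiles $e^{\lambda\tau}$ with $\lambda=0$ and $\lambda=-1$, so $\xi_1^k$ and $\xi_2^k$ are the corresponding Koopman modes with eigenvalues zero and minus one. I would stress that this is the full structural claim and not a leading-order approximation: on a single segment the subgradient, and hence the spectral components entering the sum, is constant (Attribute \ref{Att:piecewise}), so the expansion above is exact and the decomposition terminates at two terms.

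The remaining, and main, computational step is to verify $\xi_1^k\perp\xi_2^k$. I would expand $\inp{\xi_1^k}{\xi_2^k}$ and invoke the orthogonality of the spectral \ac{TV} components (Attribute \ref{Att:orthogonal}), $\inp{\varphi_i}{\varphi_j}=\norm{\varphi_i}^2\delta_{ij}$, which collapses the double sum to $a_k\sum_{i=k}^{L}(1-\lambda_i c_k)\lambda_i\norm{\varphi_i}^2$. The crux is that the constant $c_k$ in Eq. \eqref{eq:ODESolu} is precisely the ratio engineered to annihilate this cross term: substituting $c_k=\sum_{i=k}^{L}\lambda_i\norm{\varphi_i}^2/\sum_{i=k}^{L}\lambda_i^2\norm{\varphi_i}^2$ cancels $\sum_{i=k}^{L}\lambda_i\norm{\varphi_i}^2$ against $c_k\sum_{i=k}^{L}\lambda_i^2\norm{\varphi_i}^2$, giving zero. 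I expect this cancellation to be the only place where the explicit form of $c_k$ matters, and it explains why the additive shift $-c_k$ in the time reparametrization is chosen as it is. A small point to keep clean is the degenerate terminal segment, where only one spectral component survives: there $c_k=\lambda_L^{-1}$ forces $\xi_1^k=0$, so the statement is to be read as \emph{at most} two modes, with orthogonality holding trivially.
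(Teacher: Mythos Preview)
Your proposal is correct and follows essentially the same route as the paper: substitute the time change \eqref{eq:ODESolu} into the interval form \eqref{eq:inInterSolu}, split off the constant and $e^{-\tau}$ parts to identify $\xi_1^k,\xi_2^k$, and then use the orthogonality of the spectral components (Attribute~\ref{Att:orthogonal}) together with the explicit value of $c_k$ to kill the cross term. In fact you spell out the orthogonality cancellation more explicitly than the paper, which simply asserts it ``is concluded immediately'' after plugging in $c_k$; your remark on the degenerate terminal segment is an additional observation the paper defers to the subsequent theorem.
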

\begin{proof}
Substituting Eq. \eqref{eq:ODESolu} into Eq. \eqref{eq:inInterSolu}, we get
\begin{equation*}
\begin{split}
    \psi(t(\tau)) &= \sum_{i=k}^{L}\left(1+\lambda_i t(\tau)\right)\varphi_i, \qquad\qquad\qquad \forall t\in[T_{k-1},T_k)\\
    &= \sum_{i=k}^{L}\left(1+\lambda_i \left(a_ke^{-\tau} -c_k\right)\right)\varphi_i, \qquad\qquad \forall \tau\in[\tau_{k-1},\tau_k)\\
    &=\underbrace{\sum_{i=k}^{L}\varphi_i -c_k\sum_{i=k}^{L}\lambda_i \varphi_i}_{\xi_1^k} + e^{-\tau}\underbrace{a_k\sum_{i=k}^{L}\lambda_i \varphi_i}_{\xi_2^k}=\xi_1^k+e^{-\tau}\xi_2^k, \, \forall \tau\in[\tau_{k-1},\tau_k).
    \end{split}
\end{equation*}
By plugging $c_k$ from Eq. \eqref{eq:ODESolu} into $\xi_1^k,\xi_2^k$ their orthogonality is concluded immediately.$\quad\Box$ 
\end{proof}

\subsection{Analysis of the Rescaled-\ac{DMD}}
Here, we show a closed form solution to the time Rescaled-\ac{DMD} (R-DMD).  The common thread in the following discussion is \emph{Attribute \ref{Att:orthogonal}}, the orthogonality of the \ac{TV}-spectral components. The method is summarized in Algorithm \ref{algo:DMDTV}.
\begin{theorem}[R-\ac{DMD} of TV-flow]
Let $\tau_0$ be zero, then for the interval, $[\tau_{k-1},\tau_k)$, where $k=1,\dots,L-1$, R-\ac{DMD} reveals two non-zero orthogonal modes that reconstruct accurately the TV-flow in this interval. For the last interval, $[\tau_{L-1},\tau_L)$, there is only one nonzero mode.
\end{theorem}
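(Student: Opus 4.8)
The plan is to leverage the closed form already obtained in Proposition~\ref{prop:Main}, which expresses the rescaled flow on each segment as $\psi(t(\tau))=\xi_1^k+e^{-\tau}\xi_2^k$ with $\xi_1^k\perp\xi_2^k$. The key observation is that this is a sum of exactly two temporal exponentials, $e^{0\cdot\tau}$ and $e^{-1\cdot\tau}$, weighted by fixed spatial vectors. Since R-\ac{DMD} is an exponential data-fitting procedure, the whole theorem reduces to showing that (i) on the generic segments both spatial weights are present, so \ac{DMD} returns a rank-two model with eigenvalues corresponding to $0$ and $-1$, and (ii) on the terminal segment one of the two weights vanishes, collapsing the model to rank one.

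First I would establish the two-mode claim for $k=1,\dots,L-1$. Because the spectral components $\varphi_k,\dots,\varphi_L$ are orthogonal with distinct eigenvalues $\lambda_i=-T_i^{-1}$, neither $\xi_1^k$ nor $\xi_2^k$ can vanish on these segments: $\xi_2^k=a_k\sum_{i=k}^L\lambda_i\varphi_i\neq 0$ since $a_k\neq 0$, while $\xi_1^k=\sum_{i=k}^L(1-c_k\lambda_i)\varphi_i\neq 0$ because the coefficients $1-c_k\lambda_i$ cannot all vanish when the $\lambda_i$ are not all equal. The orthogonality $\xi_1^k\perp\xi_2^k$ then guarantees that these two vectors are linearly independent, so the data matrix built from uniform samples of $\psi(t(\tau))$ in the interval has rank exactly two. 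I would then run the \ac{DMD}/\ac{SVD} pipeline symbolically: the rank-two \ac{SVD} confines the dynamics to a two-dimensional subspace, the reduced linear map is diagonalized by the two discrete eigenvalues $e^{0\cdot\Delta\tau}=1$ and $e^{-\Delta\tau}$ arising from the two exponentials, and back-projection recovers two modes spanning the directions of $\xi_1^k$ and $\xi_2^k$. Since \ac{DMD} modes are defined only up to scaling, ``two non-zero orthogonal modes'' is precisely the statement that they align with the orthogonal pair $\xi_1^k,\xi_2^k$, and the reconstruction is exact because the sampled data lies exactly in this two-dimensional exponential family.

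Next I would treat the last interval, $k=L$, where the sum in Eq.~\eqref{eq:inInterSolu} has a single term and $\psi(t(\tau))=(1+\lambda_L t(\tau))\varphi_L$. Here I would compute the constant $c_L$ from Eq.~\eqref{eq:ODESolu} explicitly, $c_L=\lambda_L\norm{\varphi_L}^2/(\lambda_L^2\norm{\varphi_L}^2)=\lambda_L^{-1}$, and substitute into the expressions for $\xi_1^L,\xi_2^L$ from the proof of Proposition~\ref{prop:Main}. This yields $\xi_1^L=(1-c_L\lambda_L)\varphi_L=0$, so only $\xi_2^L=a_L\lambda_L\varphi_L$ survives; the data is then a single pure exponential and \ac{DMD} returns exactly one non-zero mode, as claimed.

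The main obstacle, deserving the most care, is the symbolic \ac{DMD} step: asserting that the algorithm's \ac{SVD}-based least-squares fit returns these modes and eigenvalues exactly rather than approximately. The argument hinges on the data being exactly rank two (for $k<L$) or rank one (for $k=L$), which is precisely what the orthogonality in Proposition~\ref{prop:Main} supplies, together with the fact that the temporal profiles $1$ and $e^{-\tau}$ are distinct and hence linearly independent as sampled sequences. I would make sure the uniform sampling uses at least two distinct $\tau$-values inside the segment so the reduced companion matrix is well posed, and verify that the zero continuous eigenvalue maps to the discrete eigenvalue $1$, so that the steady mode $\xi_1^k$ is not discarded by the algorithm's rank truncation.
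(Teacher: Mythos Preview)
Your proposal is correct and follows essentially the same approach as the paper: invoke Proposition~\ref{prop:Main} to write the rescaled flow on each segment as $\xi_1^k+e^{-\tau}\xi_2^k$, then appeal to the fact that \ac{DMD} is an exponential data-fitting procedure to read off the modes and eigenvalues. Your treatment is in fact more careful than the paper's terse argument, as you explicitly verify that $\xi_1^k,\xi_2^k\neq 0$ for $k<L$ and compute $c_L=\lambda_L^{-1}$ to show $\xi_1^L=0$ on the terminal segment, points the paper leaves implicit.
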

\begin{proof} According to Prop. \ref{prop:Main} and since \ac{DMD} is an exponential data fitting algorithm, the \ac{DMD} of the dynamics, Eq. \eqref{eq:ress2}, is as follows. The modes are
$\phi_1^k={\xi_1^k}/{\norm{\xi_1^k}},\,\phi_2^k={\xi_2^k}/{\norm{\xi_2^k}}$, and the coefficients are $\alpha_1^k = \norm{\xi_1^k}$ and $\alpha_2^k = \norm{\xi_2^k}$. Note that one mode is constant with respect to time and the second decays exponentially. Therefore, the eigenvalues are $\mu_1^k=1$ for the constant mode and $\mu_2^k=e^{-dt}$ where $dt$ is the sampling step size (see Algo. \ref{algo:DMDTV}). $\quad\Box$
\end{proof}
\begin{algorithm}[htb!] \caption{R-\ac{DMD} for \ac{TV}-flow}
\begin{algorithmic}[1]
		\State {\bf{Inputs:}} The initial condition $f$, and sampling step size  $dt$.
		\State {\bf{Initialize:}} Evolve the solution of \eqref{eq:ress2} uniformly with a step size $dt$.
		\State Invoke Algo. \ref{algo:FastTV} with the input $f$ - the result is $\mathcal{T}$ and $\mathcal{P}$.
		\State Map the set of transition time points, $\mathcal{T}$, to a new set $\Hat{\mathcal{T}}$ (Eq. \eqref{eq:ress}).
		\For {Every time segment $[\tau_i,\tau_{i+1}),\quad \tau_i,\tau_{i+1}\in \Hat{\mathcal{T}}$}
		\State Invoke the classic \ac{DMD} with $r=2$ (when $i=L-1$, $r=1$) \cite{schmid2010dynamic}.
		\EndFor
		\State {\bf{Outputs:}} Modes $\{\phi_1^k,\phi_2^k\}_{k=1}^L$, coefficients $\{\alpha_1^k,\alpha_2^k\}_{k=1}^L$, and eigenvalues $\mu_1^k=1,\mu_2^k=e^{-dt}$.
    \end{algorithmic}
    \label{algo:DMDTV}
\end{algorithm}

Now we formulate the relation between the \ac{TV} spectral components $\varphi_k$ and the R-DMD modes.
\begin{proposition}[Revealing \ac{TV} spectral components from R-\ac{DMD}]\label{prop:componentsAndModes}
Given the result of Algo. \ref{algo:DMDTV}, we can formulate the $k$th spectral component, $\varphi_k$, (Eq. \eqref{eq:closedFormSolutionTV}) by the following relation,
    \begin{equation}
        a_k\lambda_k\varphi_k =\alpha_2^k\phi_2^k - \frac{\inp{\alpha_2^k\phi_2^k}{\alpha_2^{k+1}\phi_2^{k+1}}}{\norm{\alpha_2^{k+1}\phi_2^{k+1}}^2}\alpha_2^{k+1}\phi_2^{k+1}.
    \end{equation}
\end{proposition}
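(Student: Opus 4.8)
The plan is to work directly from the explicit forms of the rescaled modes established earlier, rather than re-deriving anything from the DMD algorithm itself. First I would recall from the proof of the R-DMD theorem that the second DMD mode is exactly the (unnormalized) decaying mode of Prop. \ref{prop:Main}, i.e. $\alpha_2^k\phi_2^k = \xi_2^k$, and from that same proposition that $\xi_2^k = a_k\sum_{i=k}^{L}\lambda_i\varphi_i$. Once these two facts are in hand, the entire statement becomes a manipulation of finite linear combinations of the spectral components $\{\varphi_i\}$.

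The core step is to isolate the $k$th spectral component by telescoping two consecutive modes. I would split off the leading term, $\xi_2^k = a_k\lambda_k\varphi_k + a_k\sum_{i=k+1}^{L}\lambda_i\varphi_i$, and recognize that the trailing sum is precisely $\xi_2^{k+1}/a_{k+1}$. This gives $\xi_2^k = a_k\lambda_k\varphi_k + (a_k/a_{k+1})\,\xi_2^{k+1}$, and hence $a_k\lambda_k\varphi_k = \xi_2^k - (a_k/a_{k+1})\,\xi_2^{k+1}$. This already matches the structure of the claimed identity, so all that remains is to identify the scalar $a_k/a_{k+1}$ with the projection coefficient appearing in the proposition.

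For that identification I would invoke the orthogonality of the spectral components (\emph{Attribute \ref{Att:orthogonal}}): since $\xi_2^{k+1}$ is a linear combination of $\{\varphi_i\}_{i\ge k+1}$ and $\varphi_k$ is orthogonal to every $\varphi_i$ with $i>k$, the cross term $\inp{\varphi_k}{\xi_2^{k+1}}$ vanishes. Taking the inner product of $\xi_2^k = a_k\lambda_k\varphi_k + (a_k/a_{k+1})\,\xi_2^{k+1}$ with $\xi_2^{k+1}$ then collapses to $\inp{\xi_2^k}{\xi_2^{k+1}} = (a_k/a_{k+1})\norm{\xi_2^{k+1}}^2$, so $a_k/a_{k+1} = \inp{\xi_2^k}{\xi_2^{k+1}}/\norm{\xi_2^{k+1}}^2$. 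Substituting this back and rewriting $\xi_2^k,\xi_2^{k+1}$ as $\alpha_2^k\phi_2^k,\alpha_2^{k+1}\phi_2^{k+1}$ yields the stated formula. The only point I would double-check is that the orthogonality is being used between a single component and an entire tail of components; once that vanishing is granted, the rest is a one-line inner-product computation. I therefore expect no genuine obstacle beyond careful bookkeeping of the index ranges and confirming $\norm{\xi_2^{k+1}}\neq 0$, which holds throughout $k\le L-1$ since the decaying mode is the nonzero mode present in every interval, including the last.
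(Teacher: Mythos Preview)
Your proposal is correct and follows essentially the same route as the paper: both identify $\alpha_2^k\phi_2^k$ with $\xi_2^k=a_k\sum_{i=k}^L\lambda_i\varphi_i$, split off the $\varphi_k$ term, and use the orthogonality of the spectral components (Attribute~\ref{Att:orthogonal}) so that the projection coefficient onto $\xi_2^{k+1}$ reduces to the scalar carrying the tail sum. The only cosmetic difference is order of operations---the paper computes the right-hand side directly and simplifies, while you first isolate $a_k\lambda_k\varphi_k$ and then match the coefficient---but the underlying argument is identical.
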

\begin{proof}  
\begin{equation*}
\begin{split}
    \alpha_2^k\phi_2^k -& \frac{\inp{\alpha_2^k\phi_2^k}{\alpha_2^{k+1}\phi_2^{k+1}}}{\norm{\alpha_2^{k+1}\phi_2^{k+1}}^2}\alpha_2^{k+1}\phi_2^{k+1}= \xi_2^k - \frac{\inp{\xi_2^k}{\xi_2^{k+1}}}{\norm{\xi_2^{k+1}}^2}\xi_2^{k+1}=\\
    &=a_{k}\sum_{i=k}^{L}\lambda_i \varphi_i - \frac{\inp{a_k\sum_{i=k}^{L}\lambda_i \varphi_i}{a_{k+1}\sum_{i=k+1}^{L}\lambda_i \varphi_i}}{\norm{a_{k+1}\sum_{i=k+1}^{L}\lambda_i \varphi_i}^2}a_{k+1}\sum_{i=k+1}^{L}\lambda_i \varphi_i\\
    & =a_{k}\lambda_k \varphi_k+\\
    &\qquad a_{k}\sum_{i=k+1}^{L}\lambda_i \varphi_i - a_k\frac{\inp{\lambda_k \varphi_k+\sum_{i=k+1}^{L}\lambda_i \varphi_i}{\sum_{i=k+1}^{L}\lambda_i \varphi_i}}{\norm{\sum_{i=k+1}^{L}\lambda_i \varphi_i}^2}\sum_{i=k+1}^{L}\lambda_i \varphi_i\\
    &=a_k\lambda_k\varphi_k. \quad \Box
\end{split}
\end{equation*}

\end{proof}


\subsection{Decay profile decomposition with Koopman modes of the \acs{TV} flow }\label{subsec:DecayProfileTVKMD}
\ac{RDMD} can be a solution for homogeneous flows when the dynamics belongs to $C^1$. However, it is limited when the dynamics is in $C^0$ almost everywhere \cite{cohen2021examining}. The attribute of $C^0$ a.e. is equivalent to the fact that the Koopman modes do not exist during the entire dynamics.

Under the assumption that the dynamical system has a typical monotonic decay profile, a new method was suggested to find the Koopman modes \cite{cohen2021examining}. We summarize this algorithm in Section \ref{subsec:DecayProfileKMD}. Now, we would like to apply this method on \ac{TV}-flow.
The typical decay profile of zero-homogeneous flows, such as \ac{TV}-flow, is a linear function (see Eq. \eqref{eq:closedFormSolutionTV})
,  and can be formulated as
\begin{equation}
a_{\lambda}(t)=(1+\lambda t)^+.
\end{equation}
Denoting the step size as $dt$, we can formulate the dictionary, $D$, as
\begin{equation}
D=\begin{bmatrix}
1&(1+\lambda_1 dt)&\cdots\\
&\vdots&\\
1&(1+\lambda_N dt)&\cdots
\end{bmatrix}.
\end{equation}
The goal is to find a sparse matrix $V$ such that 
\begin{equation}\label{eq:DecayModeApprox}
\Psi\approx VD.
\end{equation}

\section{A Two Dimensional Approximation of TV-flow}

We now suggest a way to extend our fast algorithm (FGD) to two dimensions, approximating the anisotropic flow. Anisotropic \ac{TV} plays an important role in mitigating edge sparsity required by many image processing applications, such as deconvolution, denoising, MRI construction \cite{lou2015weighted}, and image segmentation \cite{bui2021weighted,wu2021adaptive}.
There are previous approximations of 2D anisotropic \ac{TV} minimizations, such as \cite{choksi2011anisotropic}.
However, there has been little research on fast TV-flow. Until now, a common practice is to solve a nonsmooth convex minimization problem at each time step (using \acf{ADMM}, primal dual or other methods). This is very inefficient, naturally.
Applying our proposed \ac{FGD} algorithm can accelerate the process tremendously. 

{\bf{Anisotropic \texorpdfstring{\ac{TV}}{TEXT}}.}
The anisotropic \ac{TV} is defined as follows,
\begin{equation}
    J_{ani}(\psi) = \inp{\norm{\nabla \psi}_1}{1}=\inp{\abs{D_x\psi}+\abs{D_y\psi}}{1}.
\end{equation}

We can calculate the subgradient of every column and row with Algo. \ref{algo:FastSubgradient}. However, some of the properties of the 1D \ac{TV}-flow are not valid in 2D. For example, the computation for the next merge event in a row is corrupted by the subgradient of a column. Therefore, we cannot update the subgradient with Algo. \ref{algo:FastTV} and we should repeatedly compute the subgraident with Algo. \ref{algo:FastSubgradient}.

Let $P^{(x)}$ be a matrix whose rows are the corresponding negative subgradients of the rows of $\psi$. Similarly, $P^{(y)}$ contains the negative subgradients of the columns of $\psi$. We assume that $P^{(x)}+P^{(y)}=0$ for every entry iff $P^{(x)}=0$ and $P^{(y)}=0$. Namely, it is most likely that the gradient descent flows of \ac{TV} in $x$ and $y$ axes do not exactly cancel each other. The explicit scheme of the gradient descent flow is given by,
\begin{equation}\label{eq:explicitScheme2D}
\psi_{k+1} =\psi_k+\left(P_k^{(x)}+P_k^{(y)}\right)dt_k, \quad \psi_0=f,
\end{equation}
where $P_k^{(x)}$ and $P_k^{(y)}$ denote the subgradients of $\psi_k$. 
The constraint on the step size $dt_k$ for which this explicit scheme converges is formulated in the following theorem.

\begin{theorem}[Convergence of the explicit scheme]\label{theo:convergence}
Let us define the following ratio,
\begin{equation}
\tilde{\lambda}_k=\frac{\norm{P_k^{(x)}+P_k^{(y)}}^2}{J_{ani}(\psi_k)},
\end{equation}
where $J_{ani}(\psi_k)$ is the anisotropic \ac{TV} of $\psi_k$. For any
time step $dt_k$ admitting,
\begin{equation}\label{eq:stepSizeIso}
dt_k=\frac{\delta}{\tilde{\lambda}_k},
\end{equation} 
where $\delta \in (0,2)$, the explicit scheme, Eq. \eqref{eq:explicitScheme2D}, converges to steady state.
\end{theorem}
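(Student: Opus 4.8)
The plan is to read the scheme \eqref{eq:explicitScheme2D} as a normalized subgradient descent on the convex functional $J_{ani}$ and to prove Fej\'er monotonicity with respect to a minimizer. First I would set $p_k := P_k^{(x)}+P_k^{(y)}$ and observe that, since the anisotropic functional splits additively as a sum of one-dimensional total variations over the rows and over the columns, the sum rule for subdifferentials of finite-valued convex functions gives $-p_k\in\partial J_{ani}(\psi_k)$, where each block $P_k^{(x)},P_k^{(y)}$ is precisely the stacked one-dimensional subgradient returned by Algo.~\ref{algo:FastSubgradient}. Because every one-dimensional subgradient has zero spatial average (\emph{Attribute \ref{Att:Average}}), $p_k$ carries zero total mass, so \eqref{eq:explicitScheme2D} preserves the mean of $f$; I would therefore fix the target $\psi^*$ to be the constant image equal to that mean, which is a global minimizer of $J_{ani}$ with $J_{ani}(\psi^*)=0$. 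The stated non-cancellation hypothesis (that $P^{(x)}+P^{(y)}=0$ entrywise only when both vanish) guarantees that $p_k=0$ occurs solely at a genuine steady state, so that whenever $\psi_k$ is non-constant one has $J_{ani}(\psi_k)>0$ and $\norm{p_k}>0$, and the step $dt_k=\delta/\tilde\lambda_k=\delta\,J_{ani}(\psi_k)/\norm{p_k}^2$ is well defined and positive.

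The core estimate tracks the squared distance to $\psi^*$. Expanding the update gives
\[
\norm{\psi_{k+1}-\psi^*}^2=\norm{\psi_k-\psi^*}^2+2\,dt_k\inp{p_k}{\psi_k-\psi^*}+dt_k^2\norm{p_k}^2 .
\]
The subgradient inequality for $J_{ani}$ at $\psi_k$, evaluated at $\theta=\psi^*$, yields $\inp{p_k}{\psi_k-\psi^*}\le J_{ani}(\psi^*)-J_{ani}(\psi_k)=-J_{ani}(\psi_k)$. Substituting $dt_k=\delta\,J_{ani}(\psi_k)/\norm{p_k}^2$ collapses the linear and quadratic terms into a single multiple of $J_{ani}(\psi_k)^2/\norm{p_k}^2$, producing
\[
\norm{\psi_{k+1}-\psi^*}^2\le\norm{\psi_k-\psi^*}^2-\delta(2-\delta)\frac{J_{ani}(\psi_k)^2}{\norm{p_k}^2}.
\]

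For $\delta\in(0,2)$ the coefficient $\delta(2-\delta)$ is strictly positive, so $\norm{\psi_k-\psi^*}$ is nonincreasing and bounded, and telescoping gives $\sum_k J_{ani}(\psi_k)^2/\norm{p_k}^2<\infty$, hence $J_{ani}(\psi_k)^2/\norm{p_k}^2\to 0$. To upgrade this to $J_{ani}(\psi_k)\to 0$ I would invoke that the discrete anisotropic subgradient is uniformly bounded, $\norm{p_k}\le C$ with $C$ depending only on the grid (each sign-type field is bounded by $1$ and the finite-difference operators have finite norm); then $J_{ani}(\psi_k)^2/C^2\le J_{ani}(\psi_k)^2/\norm{p_k}^2\to0$. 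Finally, boundedness of $\{\psi_k\}$ yields a subsequence converging to some $\bar\psi$ with $J_{ani}(\bar\psi)=0$, i.e. $\bar\psi$ constant; mean preservation forces $\bar\psi=\psi^*$, and since $\norm{\psi_k-\psi^*}$ is monotone with a subsequence tending to $0$, the entire sequence converges to $\psi^*$.

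The main obstacle I anticipate is precisely the passage from $J_{ani}(\psi_k)^2/\norm{p_k}^2\to0$ to $J_{ani}(\psi_k)\to0$: the normalization inflates $dt_k$ as the energy shrinks, so one must control $\norm{p_k}$ from above, which is where the discreteness of the grid (uniform boundedness of the subgradient) is essential. A secondary point to make rigorous is that $-p_k$ is a legitimate subgradient of the two-dimensional functional and not merely a concatenation of one-dimensional ones; this is carried by additive separability of $J_{ani}$, while the non-cancellation hypothesis is what prevents the scheme from stalling at a non-minimizer where $p_k=0$ but $J_{ani}(\psi_k)>0$.
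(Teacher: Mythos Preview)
Your proof is correct and is essentially the paper's own argument: both expand a squared norm, substitute $dt_k=\delta\,J_{ani}(\psi_k)/\norm{p_k}^2$, obtain the decrement $-\delta(2-\delta)\,J_{ani}(\psi_k)^2/\norm{p_k}^2$, and finish by bounding $\norm{p_k}$ from above to deduce $J_{ani}(\psi_k)\to 0$. The only differences are cosmetic---the paper tracks $\norm{\psi_k}^2$ and uses the one-homogeneity identity $\inp{-p_k}{\psi_k}=J_{ani}(\psi_k)$ as an equality, whereas you track $\norm{\psi_k-\psi^*}^2$ via the subgradient inequality (since $p_k$ has zero mean these two Lyapunov functions differ by the constant $\norm{\psi^*}^2$, and your inequality is in fact an equality)---and your Fej\'er framing buys the slightly stronger conclusion $\psi_k\to\psi^*$, which the paper does not spell out.
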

\begin{proof}
Let us examine the evolution of the $\ell^2$ norm of $\psi_k$ under the anisotopic \ac{TV}-flow. We can formulate the norm of $\psi_{k+1}$ as
\begin{equation} 
\begin{split}
\norm{\psi_{k+1}}^2&=\norm{\psi_{k}+\left(P_k^{(x)}+P_k^{(y)}\right)dt_k}^2\\
&=\norm{\psi_{k}}^2+2\inp{\psi_{k}}{P_k^{(x)}+P_k^{(y)}}dt_k+\norm{\left(P_k^{(x)}+P_k^{(y)}\right)}^2\cdot dt_k^2\\
&=\norm{\psi_{k}}^2+2\inp{\psi_{k}}{P_k^{(x)}+P_k^{(y)}}\delta\frac{J_{ani}(\psi_k)}{\norm{P_k^{(x)}+P_k^{(y)}}^2}\\
&\qquad+\norm{P_k^{(x)}+P_k^{(y)}}^2\cdot \delta^2\frac{J(\psi_k)^2}{\norm{P_k^{(x)}+P_k^{(y)}}^4}\\
&=\norm{\psi_{k}}^2-2\delta\frac{J_{ani}(\psi_k)^2}{\norm{P_k^{(x)}+P_k^{(y)}}^2}+ \delta^2\frac{J_{ani}(\psi_k)^2}{\norm{P_k^{(x)}+P_k^{(y)}}^2}\\
&=\norm{\psi_{k}}^2+\left(\delta^2-2\delta\right)\frac{J_{ani}(\psi_k)^2}{\norm{P_k^{(x)}+P_k^{(y)}}^2}
\end{split}
\end{equation}
Since $\delta\in(0,2)$, $\norm{\psi_{k+1}}^2-\norm{\psi_{k}}^2\leq0$. Then, the series $\{\norm{\psi_k}\}^2$ is monotonically decreasing and bounded from below by zero, therefore converges. Then, the difference between two successive elements converges to zero. Since the term $\delta^2-2\delta$ is constant then the ratio $\frac{J_{ani}(\psi_k)^2}{\norm{P_k^{(x)}+P_k^{(y)}}^2}$ converges to zero. In addition, the denominator is bounded from above, therefore, $J(\psi_k)\to 0$ as $k\to\infty$. 
\end{proof}

\section{Results}\label{sec:res}
In this section, we illustrate the theory and algorithms discussed above. We use standard first-order discretization of the derivatives and Neumann boundary conditions.
\subsection{1D results}
We begin with a toy example, depicted in Fig. \ref{subfig:peaks_src}. We show that the solution of \eqref{eq:ss} decays linearly (Fig. \ref{subfig:pulse_flow_mesh}) and that of Eq. \eqref{eq:ress2} piecewise exponentially, (Fig. \ref{subfig:pulse_flow_dmd_mesh}).
\begin{figure}[phtb!]
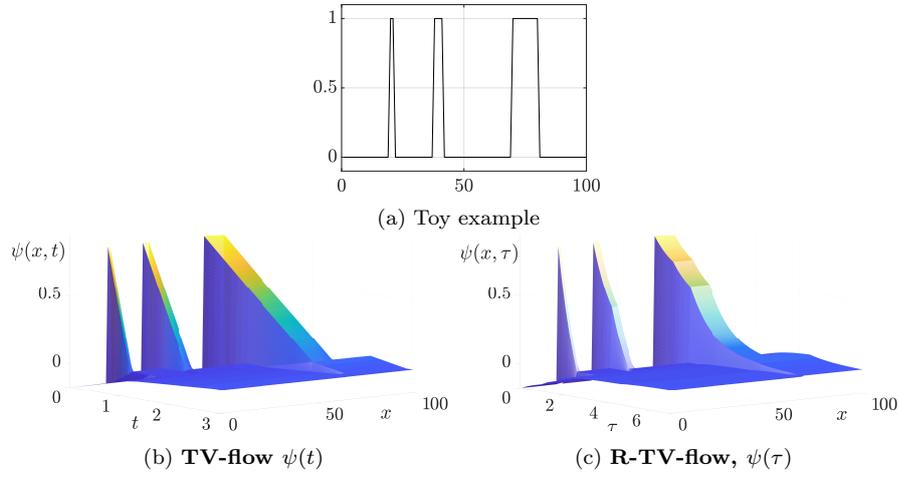

\centering
\begin{subfigure}[t]{0.35\textwidth} 
\centering
        \includegraphics[trim=0 125 0 300, clip,width=1\textwidth,valign = t]{./Figs/1D/peaks_src.pdf}
        \caption{Toy example}
        \label{subfig:peaks_src}
    \end{subfigure}\\
\begin{subfigure}[t]{0.49\textwidth}
\centering
\includegraphics[trim=0 190 0 800, clip,width=1\textwidth]{./Figs/1D/pulse_flow_mesh.pdf}
\caption{{\bf{\ac{TV}-flow $\psi(t)$}}}
\label{subfig:pulse_flow_mesh}
\end{subfigure}
\begin{subfigure}[t]{0.49\textwidth}
\includegraphics[trim=0 190 0 800, clip,width=1\textwidth]{./Figs/1D/DMD/pulse_flow_dmd_mesh.pdf}
\caption{{\bf{R-\ac{TV}-flow, $\psi(\tau)$}}}
\label{subfig:pulse_flow_dmd_mesh}
\end{subfigure}
\caption{{\bf{Time reparametrization}} - (a) {\bf{The initial condition}} is a signal with three pulses with different widths. (b) {\bf{\ac{TV}-flow $\psi(t)$}} decays piecewise linearly. (c) {\bf{R-\ac{TV}-flow}}, $\psi(t)$ is mapped to a piecewise smooth function. The non-smooth points represent transitions in the subgradient.}
    \label{fig:meshmesh}
\end{figure}

In Fig. \ref{fig:DMD_mods_1_and_2} we show the \ac{TV}-modes defined in Prop. \ref{prop:Main} with the initial condition Fig. \ref{subfig:peaks_src}. It contains six disjoint intervals with the corresponding modes $\{\xi_1^k,\xi_2^k\}_{k=1}^6$.
\begin{figure}[phtb!]
    \centering 
    \includegraphics[trim=850 1000 650 120, clip,width=1\textwidth]{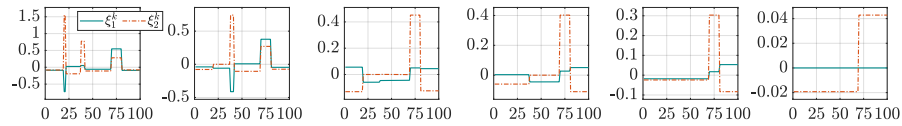}
    \caption{Modes: $\xi_1^k$ (teal) - constant, $\xi_2^k$ (orange) - exponentially decaying.
    }
    \label{fig:DMD_mods_1_and_2}
\end{figure}

In Fig. \ref{fig:fast_TV_pulse}-top the \ac{TV}-spectral decomposition, dashed red line (computed in the standard way, see \cite{gilboa2014total}) is compared with two algorithms: Algorithm \ref{algo:DMDTV} based on Prop. \ref{prop:componentsAndModes}, black dotted line, and Algorithm \ref{algo:FastTV}, blue line.
The errors between the \ac{TV}-spectral decomposition and Algorithms \ref{algo:DMDTV} and \ref{algo:FastTV} are depicted in Fig. \ref{fig:fast_TV_pulse}-bottom. We can observe an excellent match in both cases.
\begin{figure}[phtb!]
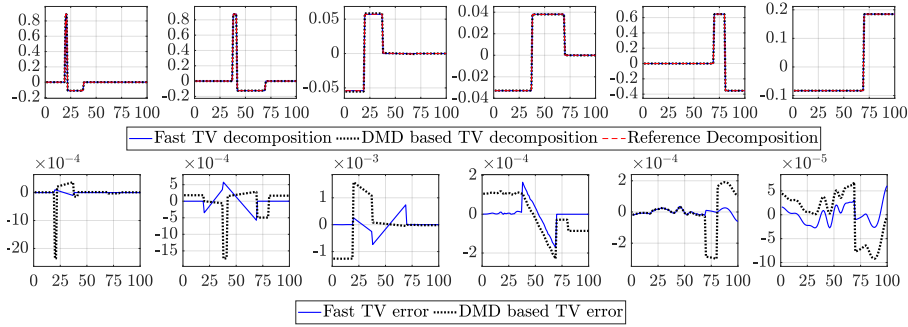

    \centering 
    \includegraphics[trim=850 800 650 120, clip,width=1\textwidth]{./Figs/1D/FastTV/fast_TV_pulse_flow.pdf}
    \centering 
    \includegraphics[trim=850 500 650 0, clip,width=1\textwidth]{./Figs/1D/FastTV/fast_TV_pulse_error.pdf}
    \caption{{\bf{TV spectral decomposition comparison for toy example}} - The standard method of spectral decomposition (Dashed red line) vs. fast TV decomposition and R-DMD decomposition in blue and dotted black lines respectively. Bottom row - respecive errors}
    \label{fig:fast_TV_pulse}
\end{figure}

In Fig. \ref{fig:fast_TV_zebrarow}, we show results of the fast \ac{TV}-spectral decomposition, Algorithm \ref{algo:FastTV}, applied on a natural signal. We arbitrarily chose the red line from the zebra in Fig. \ref{subfig:zebra_imgRedLine}, depicted in Fig. \ref{subfig:zebra_line_src}. Bands of standard \ac{TV}-spectral decomposition and the fast \ac{TV}-spectral decomposition are shown in Fig. \ref{subfig:fast_TV_zebrarow}. One can observe that our proposed fast method recovers the spectral bands faithfully, with negligible error, Fig. \ref{subfig:fast_TV_zebrarow_error}.

\begin{figure}[phtb!]
\centering
\captionsetup[subfigure]{justification=centering}

\begin{subfigure}[t]{0.2\textwidth} 
\centering
        \includegraphics[trim=0 -60 0 0, clip,width=0.473\textwidth,valign = t]{./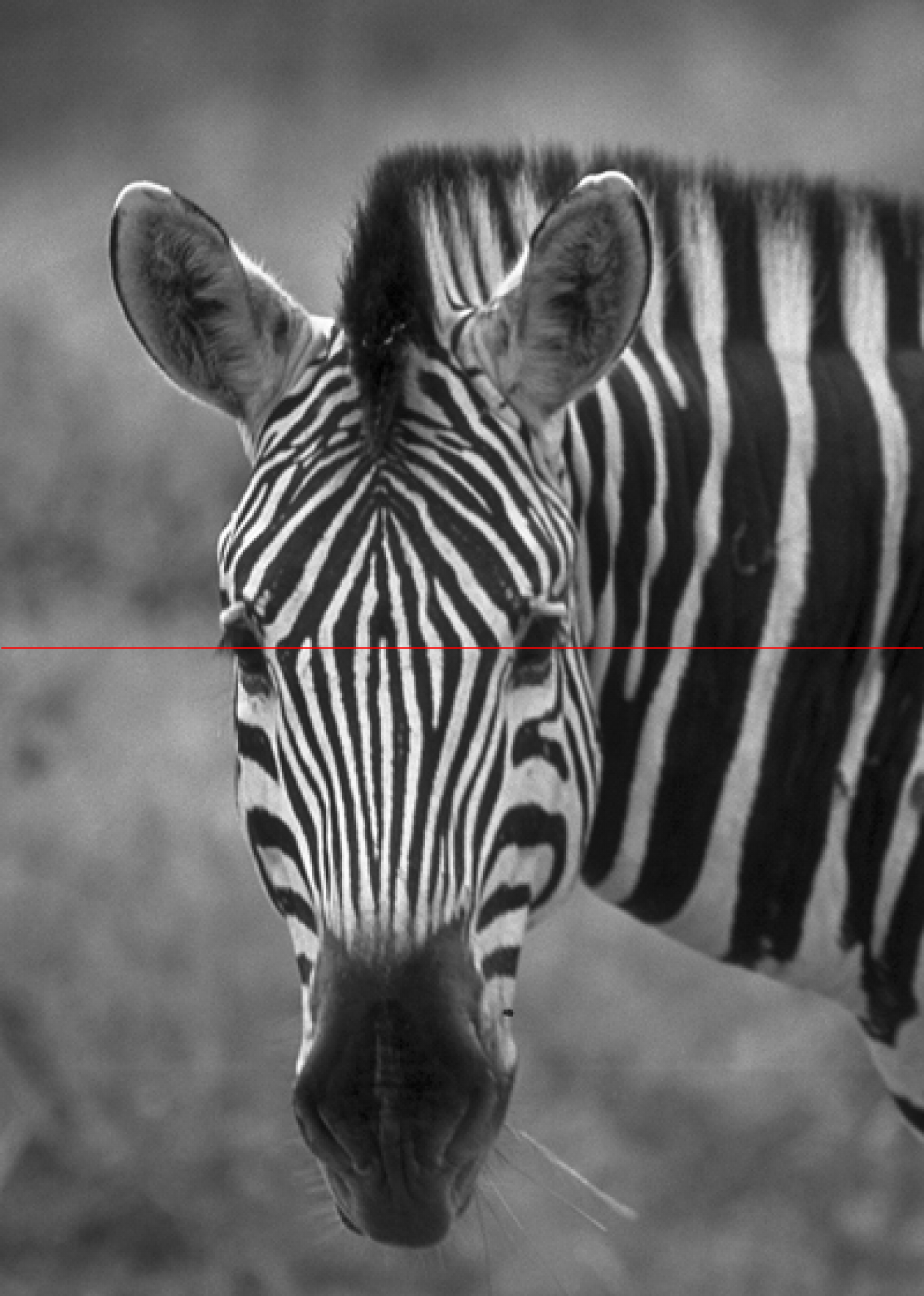}
        \caption{{\bf{Zebra image}}}
        \label{subfig:zebra_imgRedLine}
    \end{subfigure}
\begin{subfigure}[t]{0.375\textwidth}
\centering
    \includegraphics[trim=0 125 0 300, clip,width=0.66\textwidth,valign = t]{./Figs/1D/zebra_line_src.pdf}
    \caption{{\bf{The red line from (a)}}}
    \label{subfig:zebra_line_src}
\end{subfigure}\\
\begin{subfigure}[t]{1\textwidth}
\centering
    \includegraphics[trim=850 725 650 100, clip,width=1\textwidth]{./Figs/1D/FastTV/fast_TV_zebrarow.pdf}
    \caption{\bf{\ac{TV}-spectral decomposition}}
    \label{subfig:fast_TV_zebrarow}
    \end{subfigure}\\
\begin{subfigure}[t]{1\textwidth}
\centering    
    \includegraphics[trim=850 850 650 0, clip,width=1\textwidth]{./Figs/1D/FastTV/fast_TV_zebrarow_error.pdf}
    \caption{{\bf{Error}}}
    \label{subfig:fast_TV_zebrarow_error}
    \end{subfigure}
    \caption{{\bf{TV spectral decomposition comparison for an arbitrary initial condition}} - (b) The corresponding values of the pixels on the red line in (a). (c) Standard method of spectral decomposition (Dashed red line) vs. fast TV decomposition (Blue line). (d) The respective error.  }
    \label{fig:fast_TV_zebrarow}
\end{figure}


{\bf{Rough performance comparison.}} 
We report the elapsed time in seconds, running in Matlab 2018b on an 8th Gen. Core i7 laptop with 16GB RAM.  
Initial condition Fig. \ref{subfig:peaks_src}: Standard method (iterative application of \cite{chambolle2004algorithm}) - $488.4$s; ours - $0.013$s.
Initial condition Fig. \ref{subfig:zebra_line_src} (zebra):
Standard method - $7.1\times10^3$s; ours - $0.15$s. 

\subsection{2D results} 
Here we compare between isotropic and anisotropic \ac{TV}-spectral decomposition and between the \ac{TV}-spectral decomposition and the decay profile decomposition with Koopman modes \eqref{eq:DecayModeApprox}. The Anisotropic \ac{TV}-Flow is computed by the accelerated flow based on Theorem \ref{theo:convergence}. The algorithm to approximate the Koopman mode is dictated by the typical decay profile and formulated in \ref{subsec:DecayProfileTVKMD}.

In Fig. \ref{subfig:TV-zebra}, we show the isotropic \ac{TV} spectral decomposition of the Zebra from Fig. \ref{subfig:zebra_imgRedLine}. In Figs. \ref{subfig:z1_TV}--\ref{subfig:z4_TV}, we show the decomposition according to the percentage depicted in Fig. \ref{subfig:TV-zebra}.
\begin{figure}[phtb]
\centering
\subfloat[The isotropic \ac{TV}-spectrum vs. time]
  {
\includegraphics[width=0.6425\textwidth]{./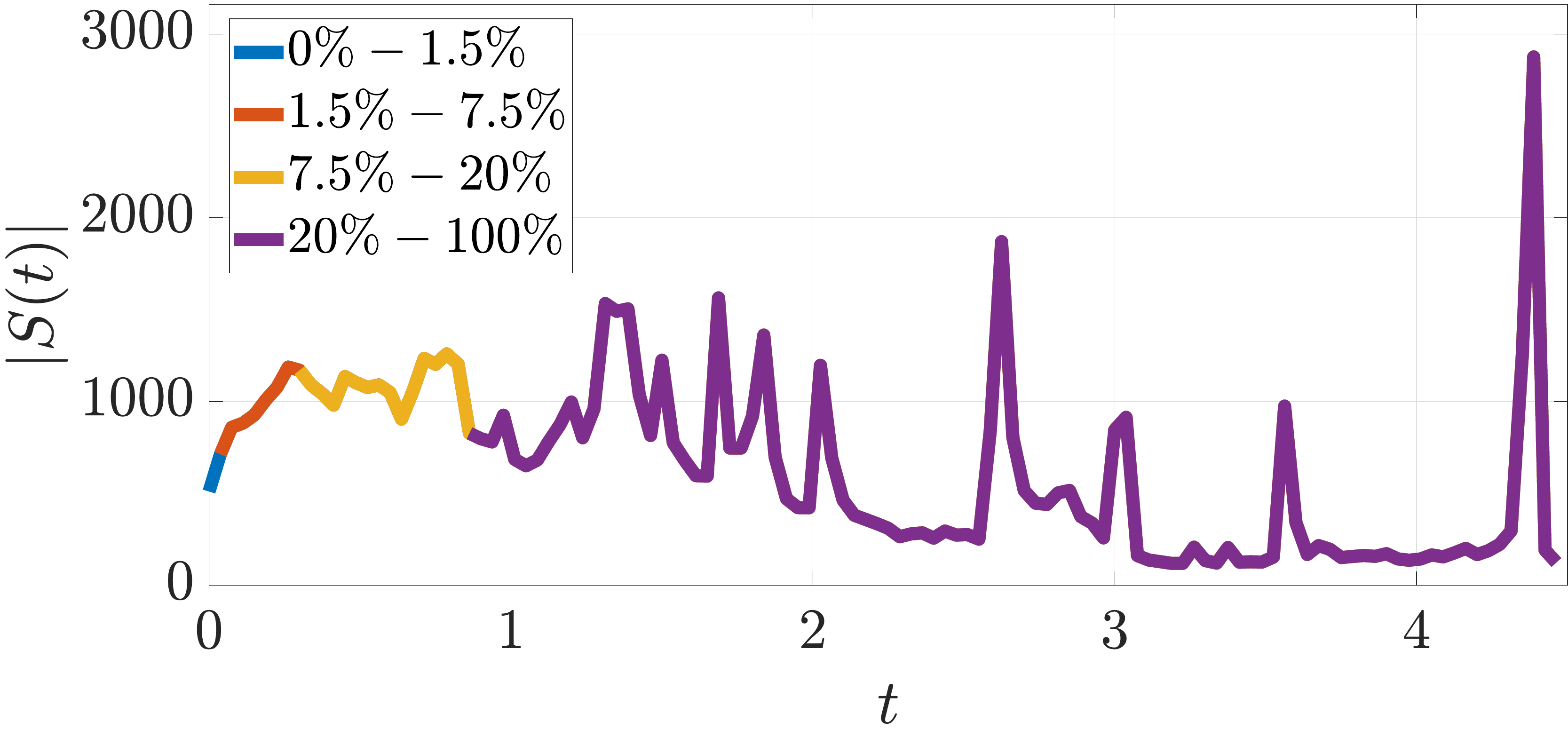}
\label{subfig:TV-zebra}
}\\
\subfloat[$0\%-1.5\%$]
  {
\includegraphics[width=0.215\textwidth]{./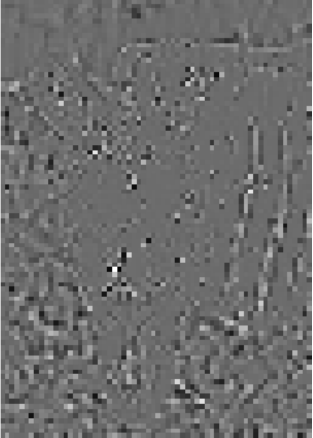}
\label{subfig:z1_TV}
}
\subfloat[$1.5\%-7.5\%$]
  {
\includegraphics[width=0.215\textwidth]{./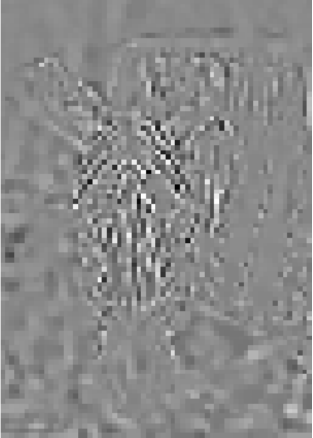}
\label{subfig:z2_TV}
}
\subfloat[$7.5\%-20\%$]
  {
\includegraphics[width=0.215\textwidth]{./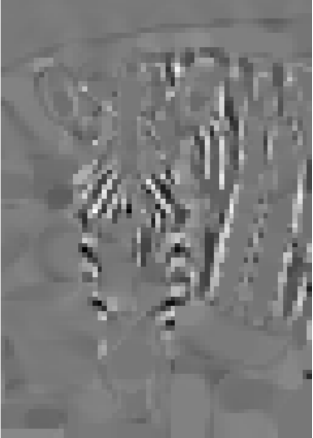}
\label{subfig:z3_TV}
}
\subfloat[$20\%-100\%$]
  {
\includegraphics[width=0.215\textwidth]{./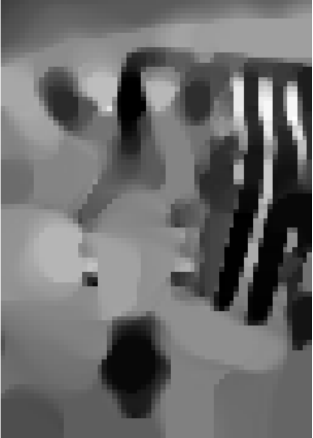}
\label{subfig:z4_TV}
}
\caption{Zebra image isotropic \ac{TV}-decomposition}
\label{Fig:ZebraDecomposition_TV}
\end{figure}
In Fig. \ref{Fig:ZebraDecomposition_TVaniso_KMD}, we present the decomposition results to the same band separation, but this time based on Eq. \eqref{eq:DecayModeApprox}. 
From a qualitative perspective, a very similar decomposition is obtained.

\begin{figure}[phtb]
\centering
\subfloat[$0\%-1.5\%$]
  {
\includegraphics[width=0.215\textwidth]{./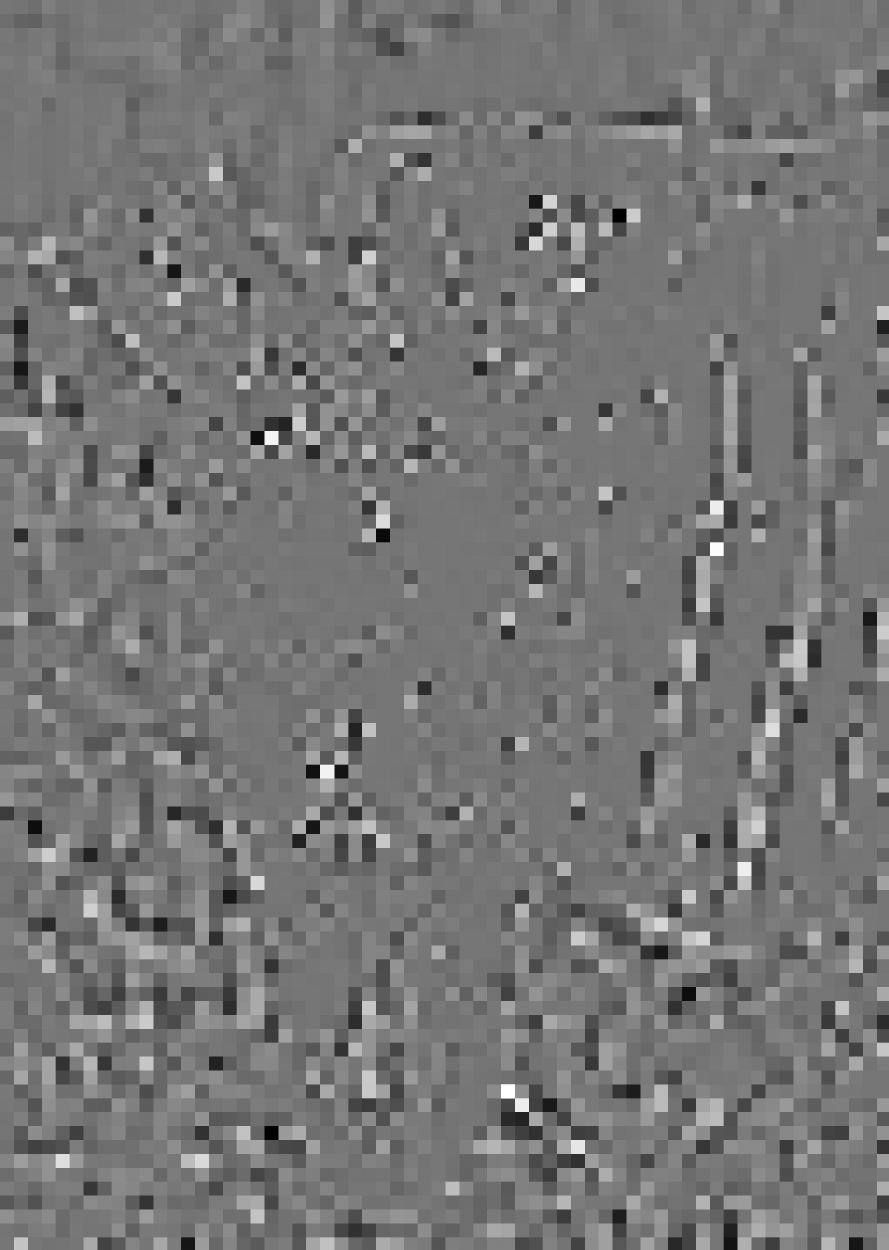}
\label{subfig:TV2D_KMD_F1}
}
\subfloat[$1.5\%-5\%$]
  {
\includegraphics[width=0.215\textwidth]{./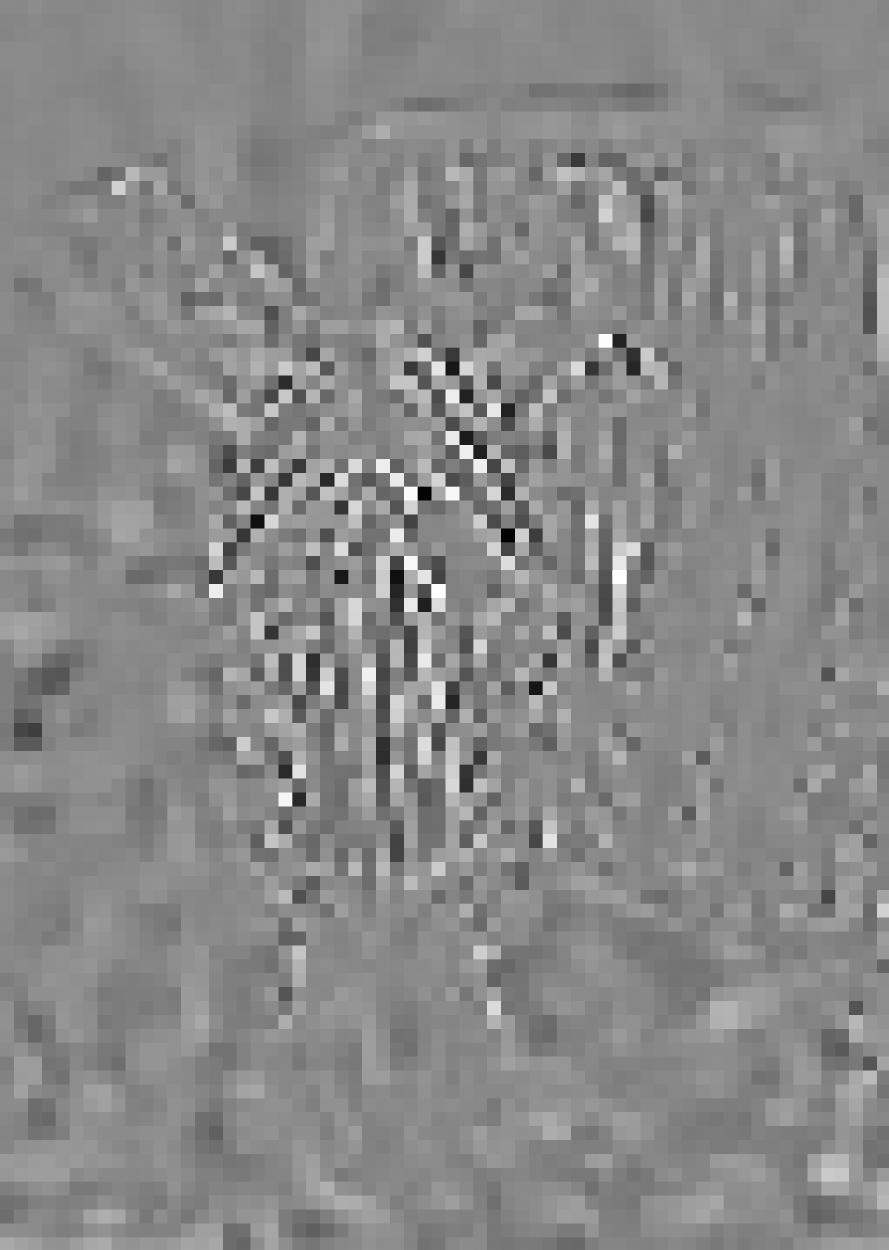}
\label{subfig:TV2D_KMD_F2}
}
\subfloat[$5\%-12.5\%$]
  {
\includegraphics[width=0.215\textwidth]{./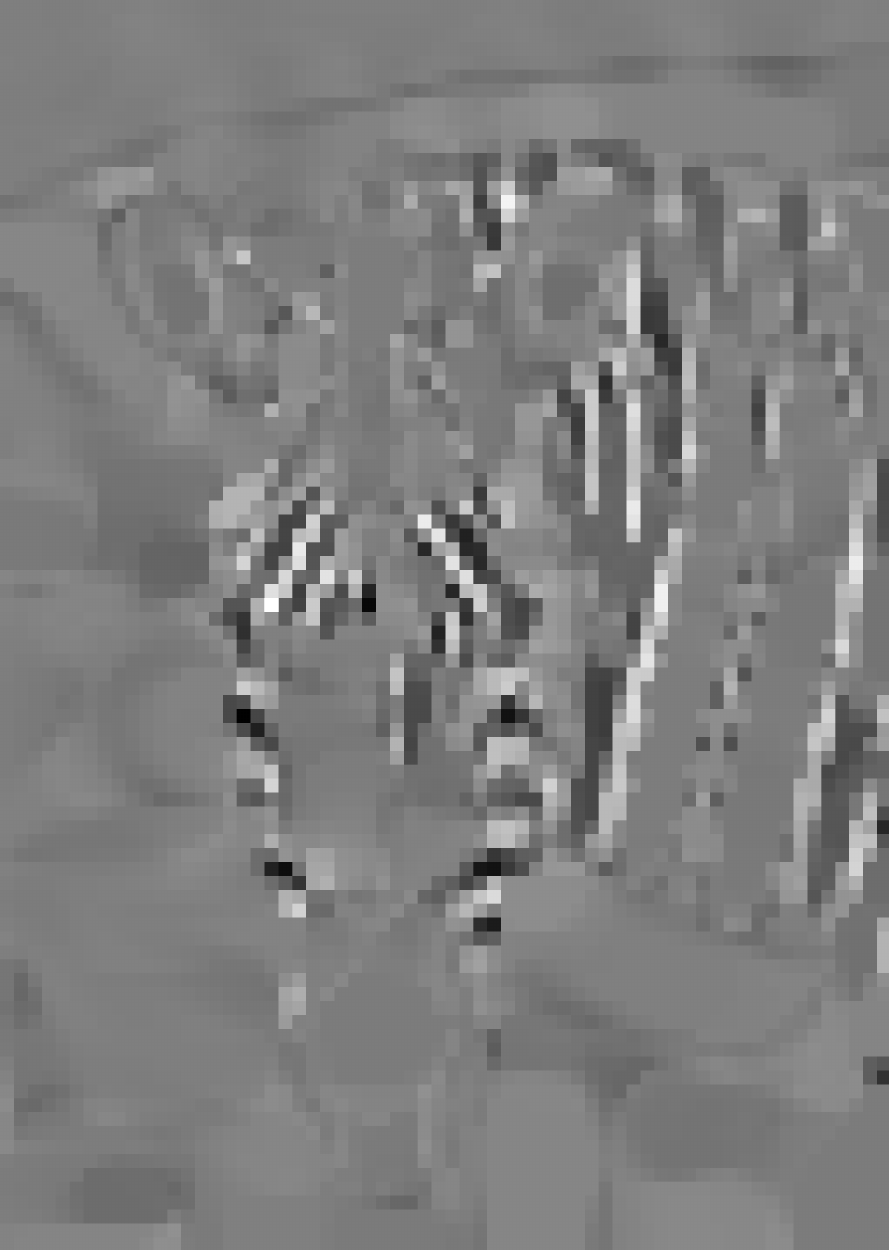}
\label{subfig:TV2D_KMD_F3}
}
\subfloat[$20\%-100\%$]
  {
\includegraphics[width=0.215\textwidth]{./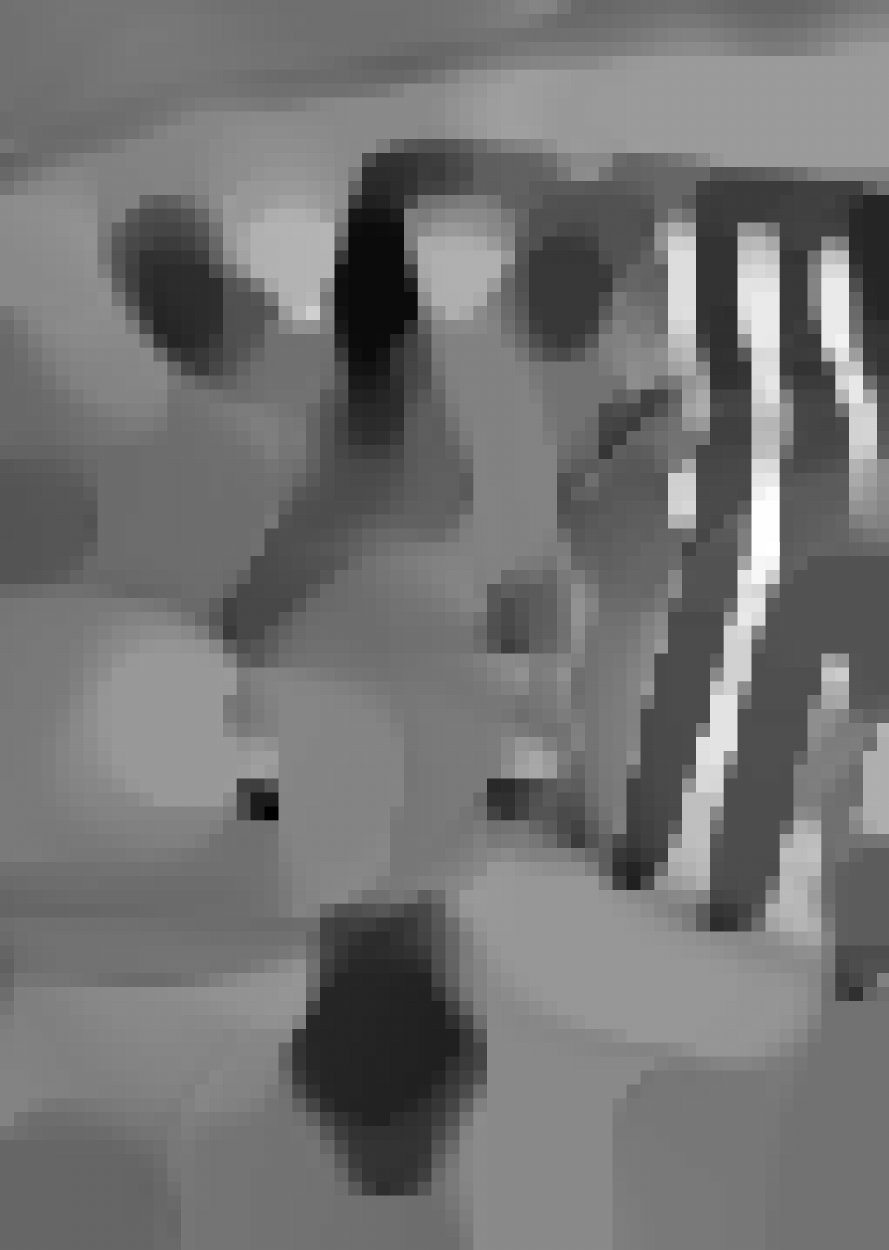}
\label{subfig:TV2D_KMD_F4}
}
\caption{Zebra image - decay profile decomposition with Koopman modes of isotropic \ac{TV} flow}
\label{Fig:ZebraDecomposition_TVaniso_KMD}
\end{figure}

In Fig. \ref{subfig:s}, the anisotropic \ac{TV}-spectrum is presented and the decomposition according to the percentage in \ref{subfig:s} is depicted in Figs. \ref{subfig:FastTV2D_F1}--\ref{subfig:FastTV2D_F4}.
\begin{figure}[phtb]
\centering
\subfloat[The anisotropic \ac{TV}-spectrum vs. time]
  {
\includegraphics[width=0.6425\textwidth]{./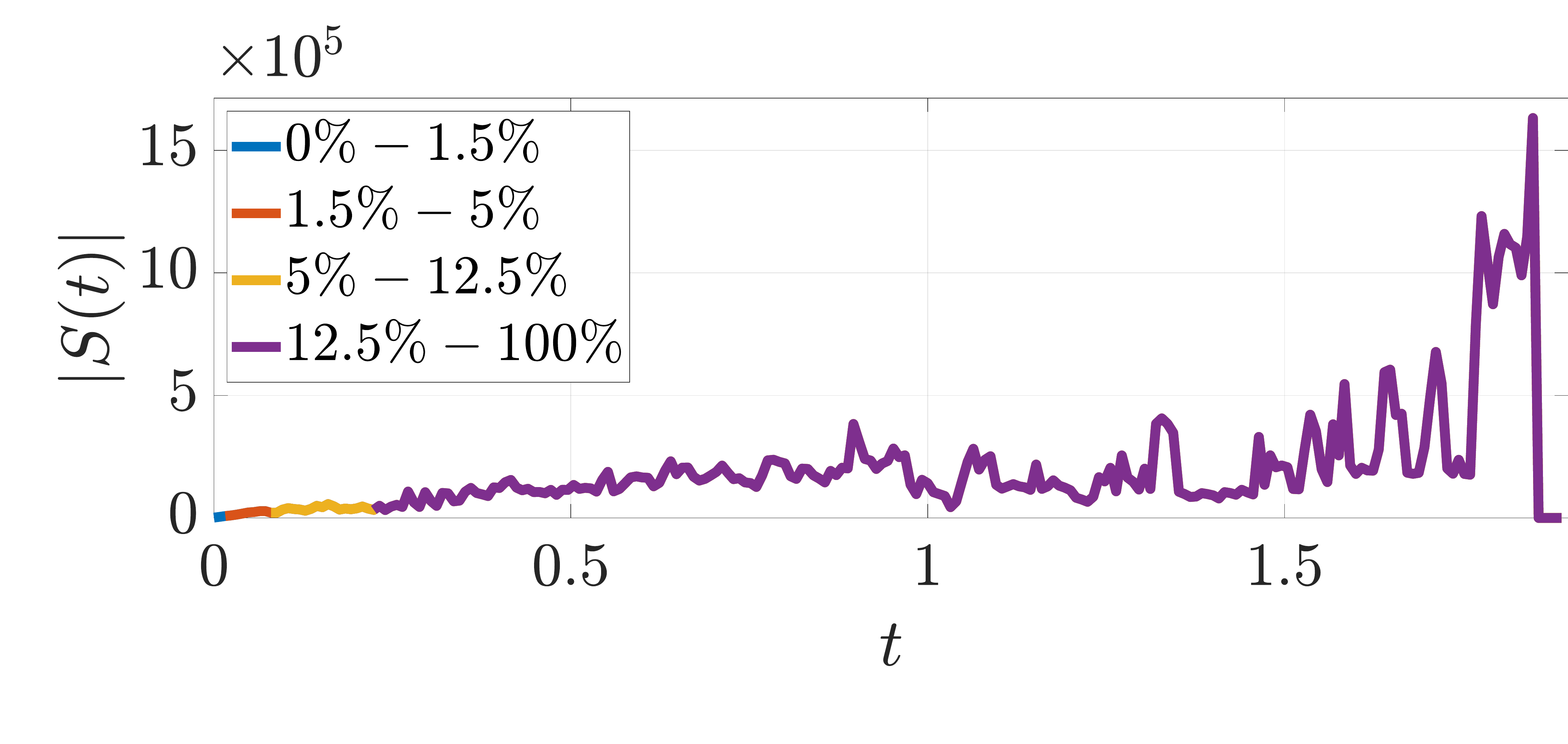}
\label{subfig:s}
}\\
\subfloat[$0\%-1.5\%$]
  {
\includegraphics[width=0.215\textwidth]{./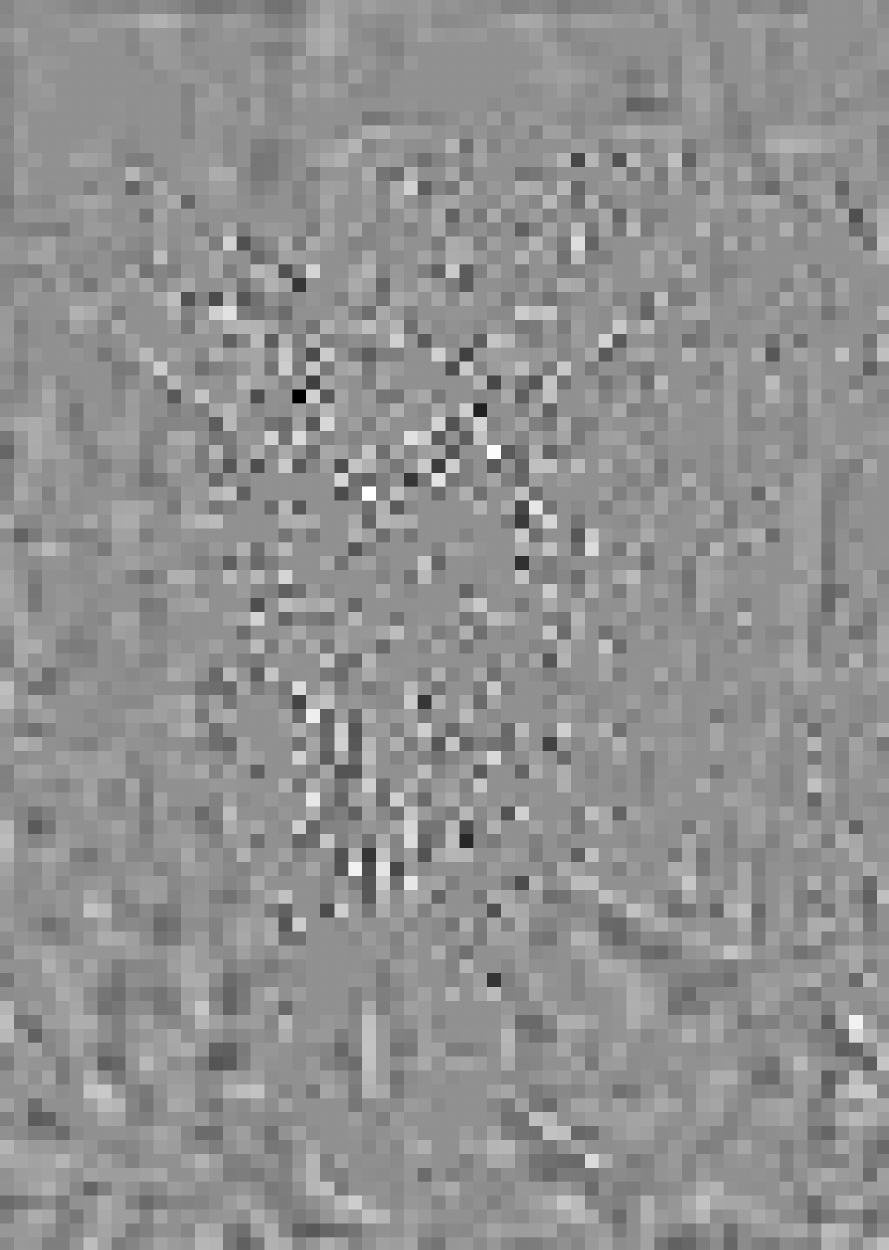}
\label{subfig:FastTV2D_F1}
}
\subfloat[$1.5\%-5\%$]
  {
\includegraphics[width=0.215\textwidth]{./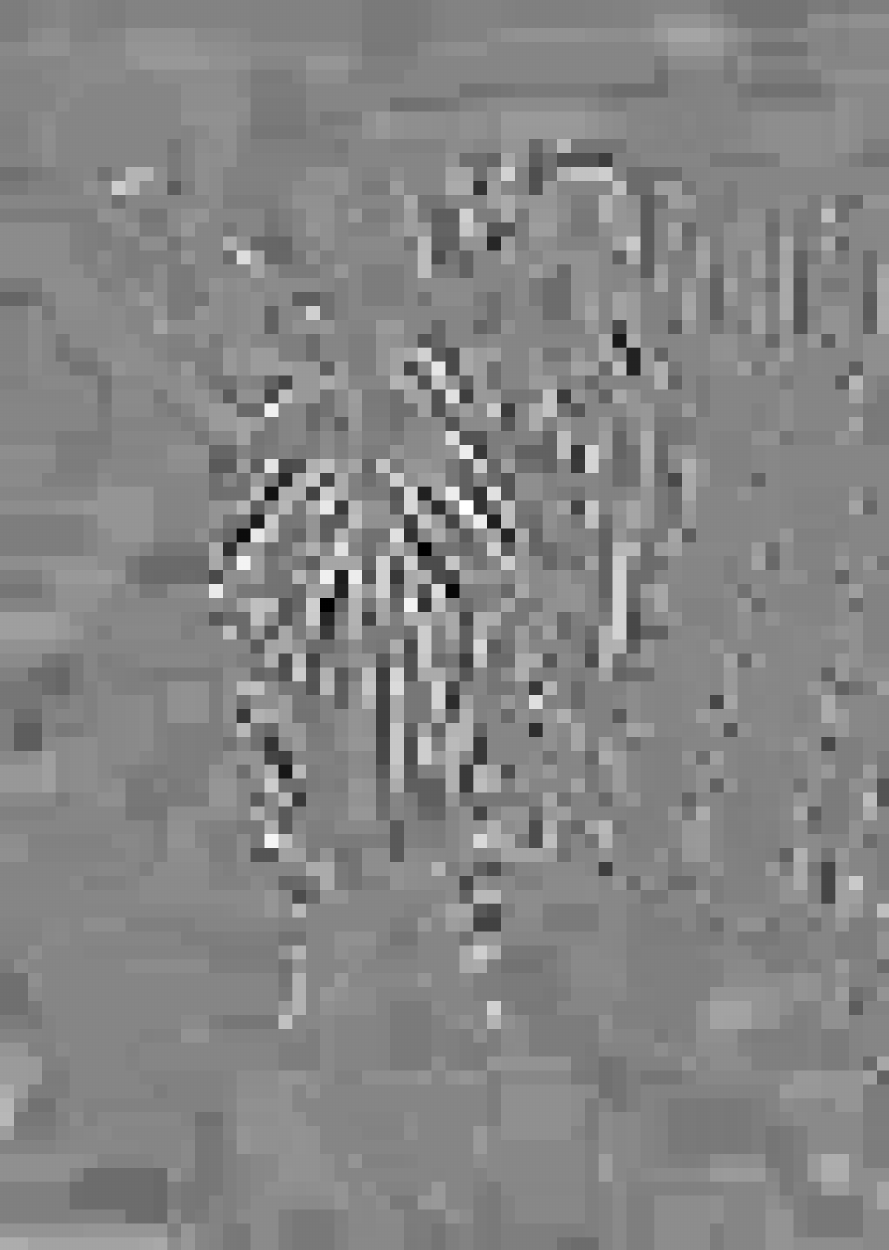}
\label{subfig:FastTV2D_F2}
}
\subfloat[$5\%-12.5\%$]
  {
\includegraphics[width=0.215\textwidth]{./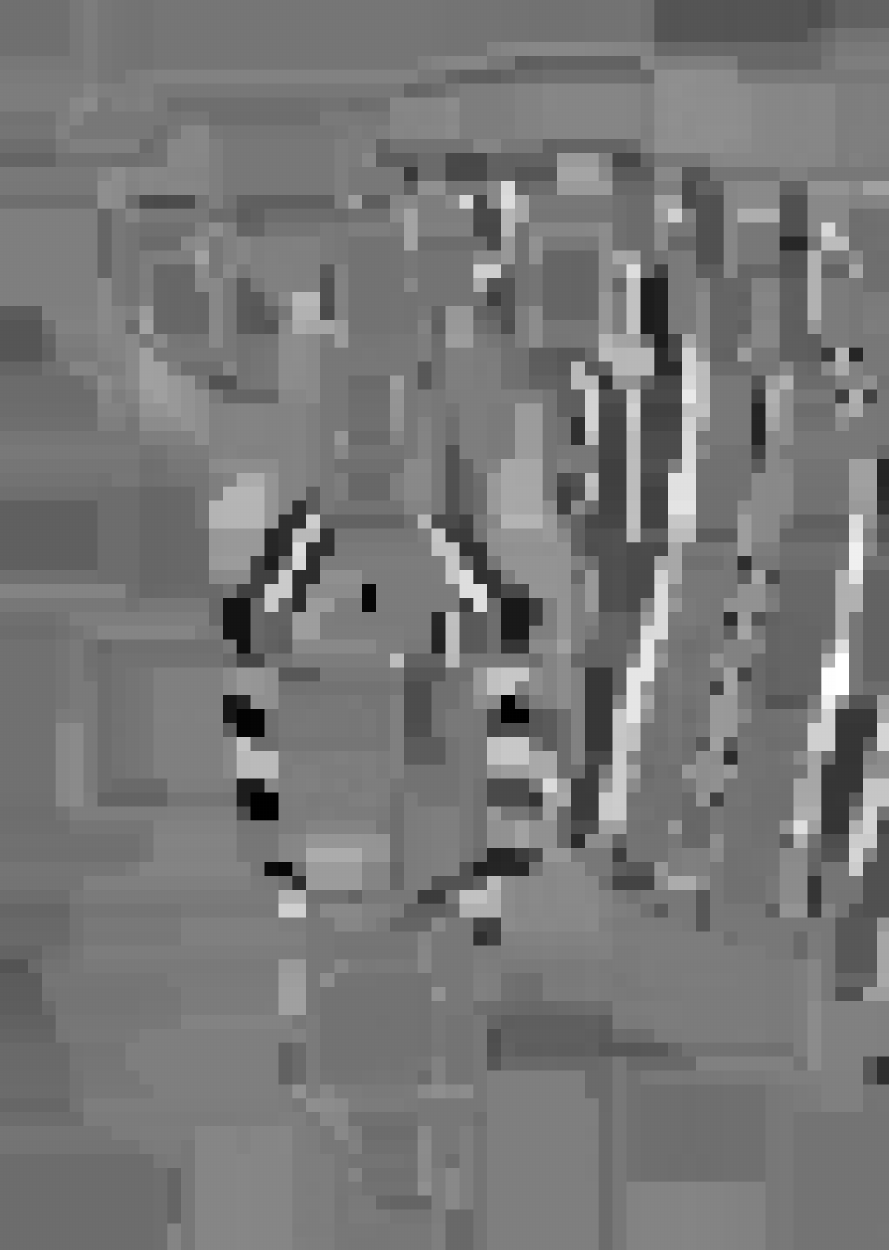}
\label{subfig:FastTV2D_F3}
}
\subfloat[$20\%-100\%$]
  {
\includegraphics[width=0.215\textwidth]{./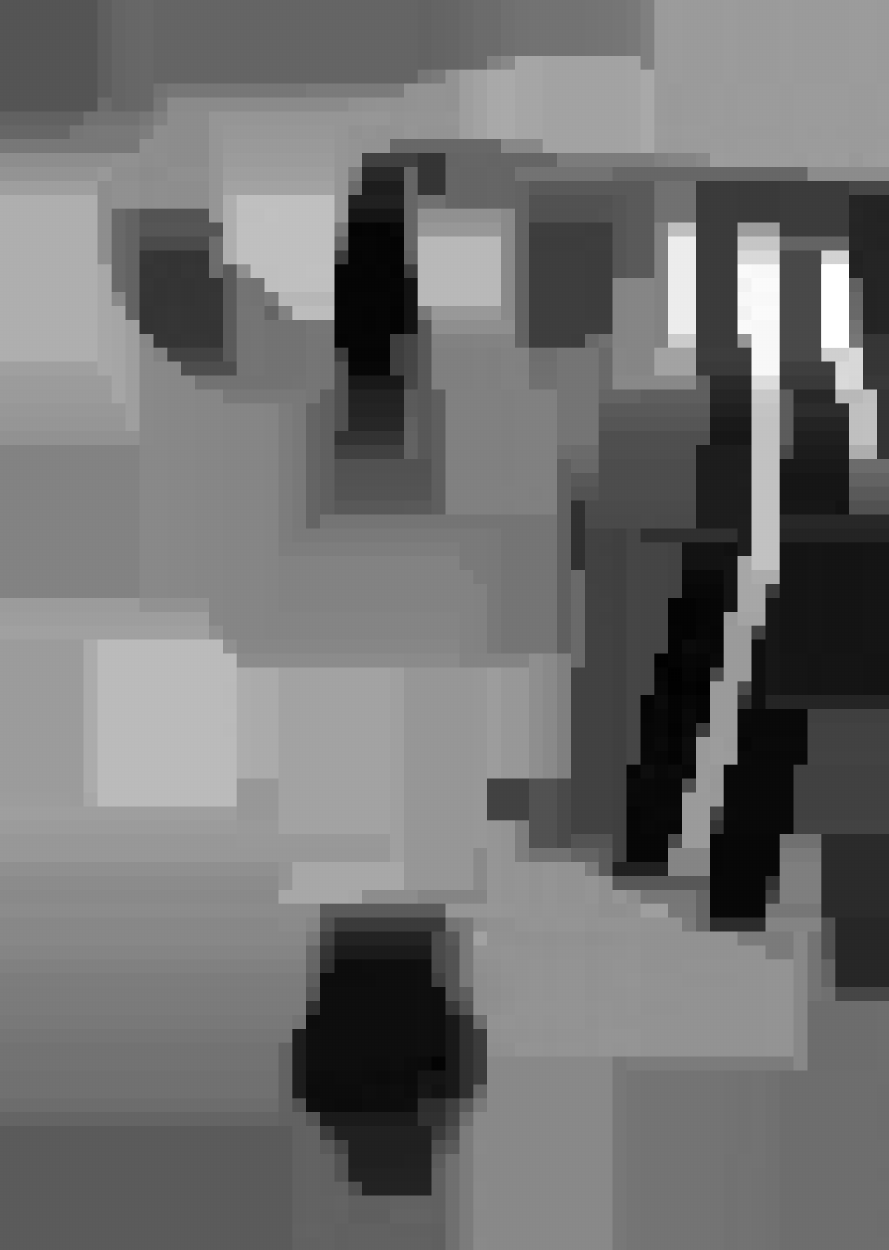}
\label{subfig:FastTV2D_F4}
}
\caption{Zebra image anisotropic \ac{TV}-decomposition, computed by the accelerated flow, Theorem \ref{theo:convergence} and Algo. \ref{algo:FastSubgradient}.}
\label{Fig:ZebraDecomposition_TVfast}
\end{figure}
We show the corresponding decay profile decomposition in Fig. \ref{Fig:ZebraDecomposition_TViso_KMD}
\begin{figure}[phtb]
\centering
\captionsetup[subfigure]{justification=centering}
\subfloat[$0\%-1.5\%$]
  {
\includegraphics[width=0.215\textwidth]{./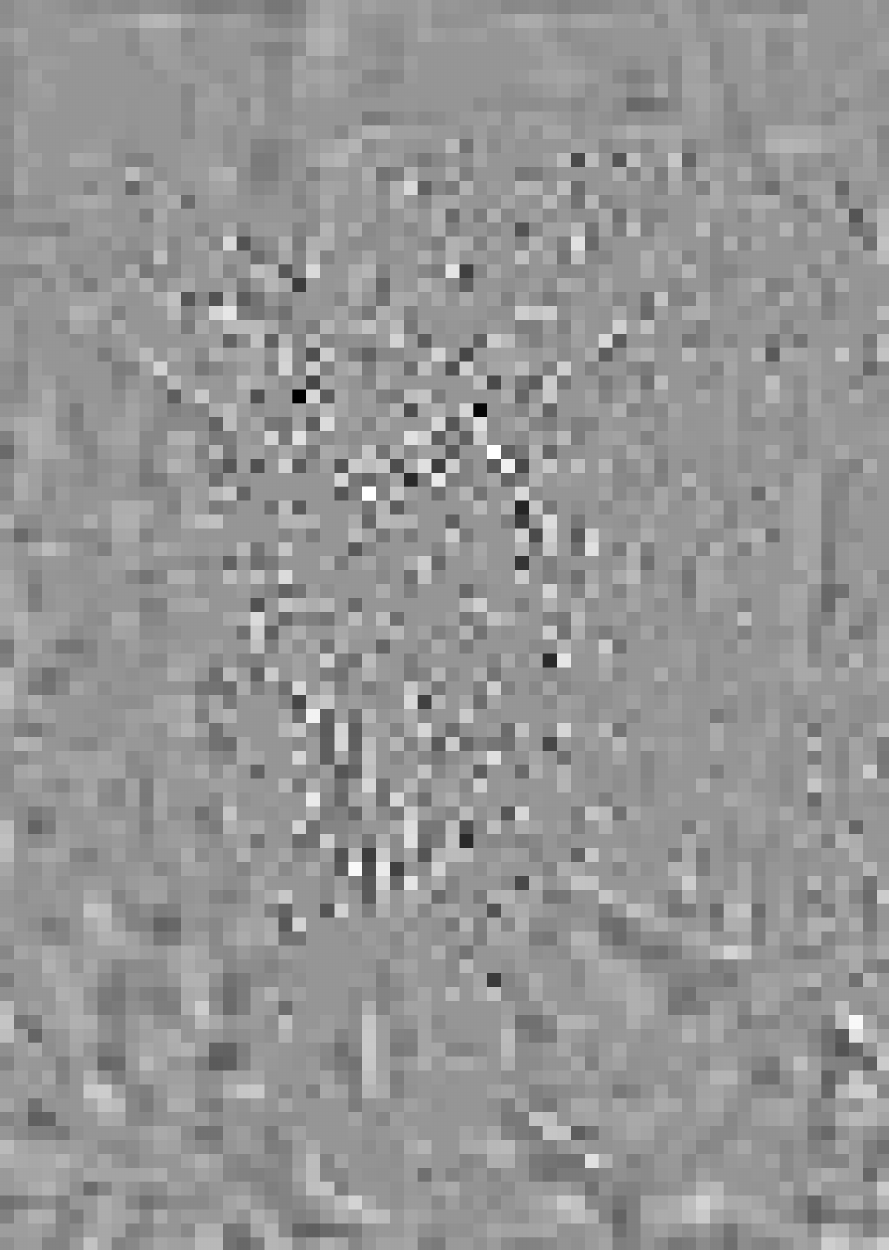}
\label{subfig:TV2Diso_KMD_F1}
}
\subfloat[$1.5\%-5\%$]
  {
\includegraphics[width=0.215\textwidth]{./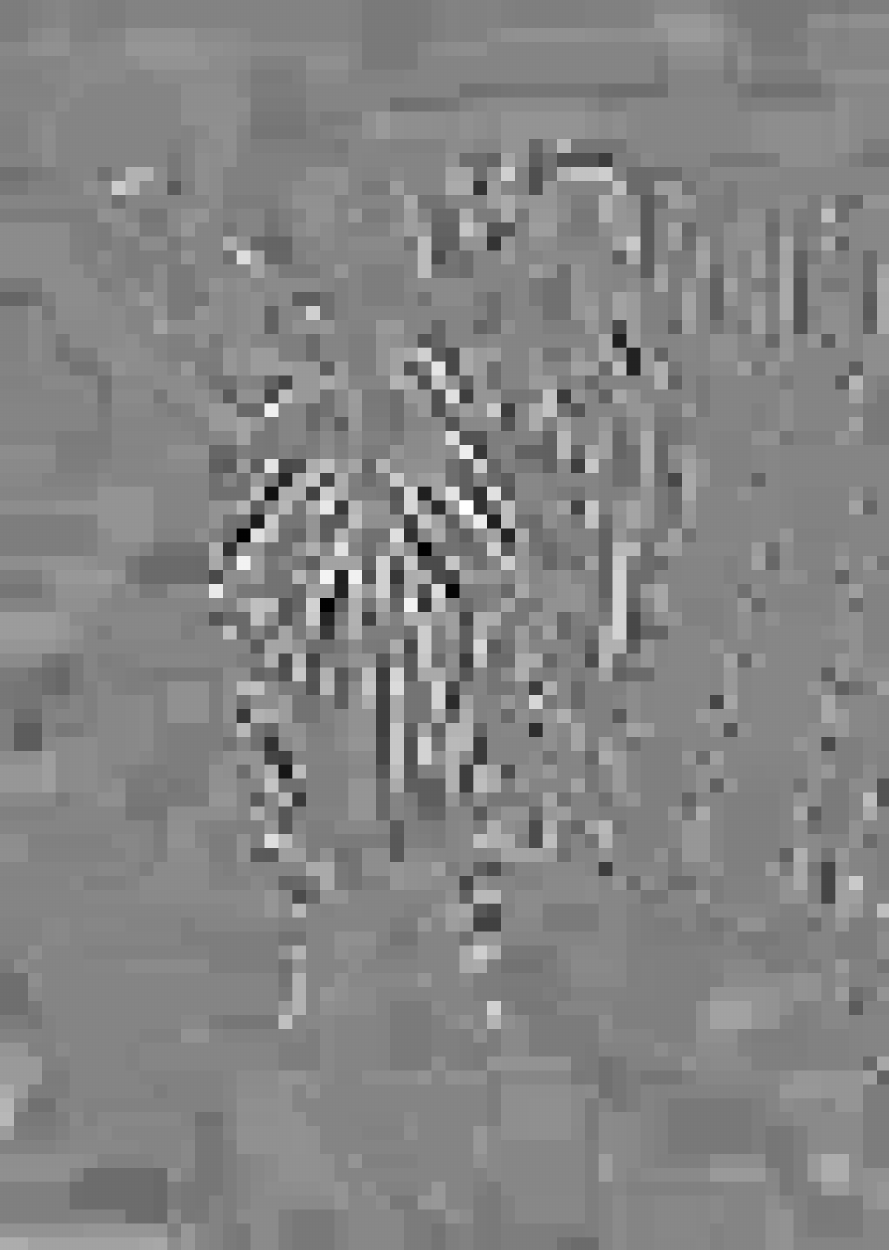}
\label{subfig:TV2Diso_KMD_F2}
}
\subfloat[$5\%-12.5\%$]
  {
\includegraphics[width=0.215\textwidth]{./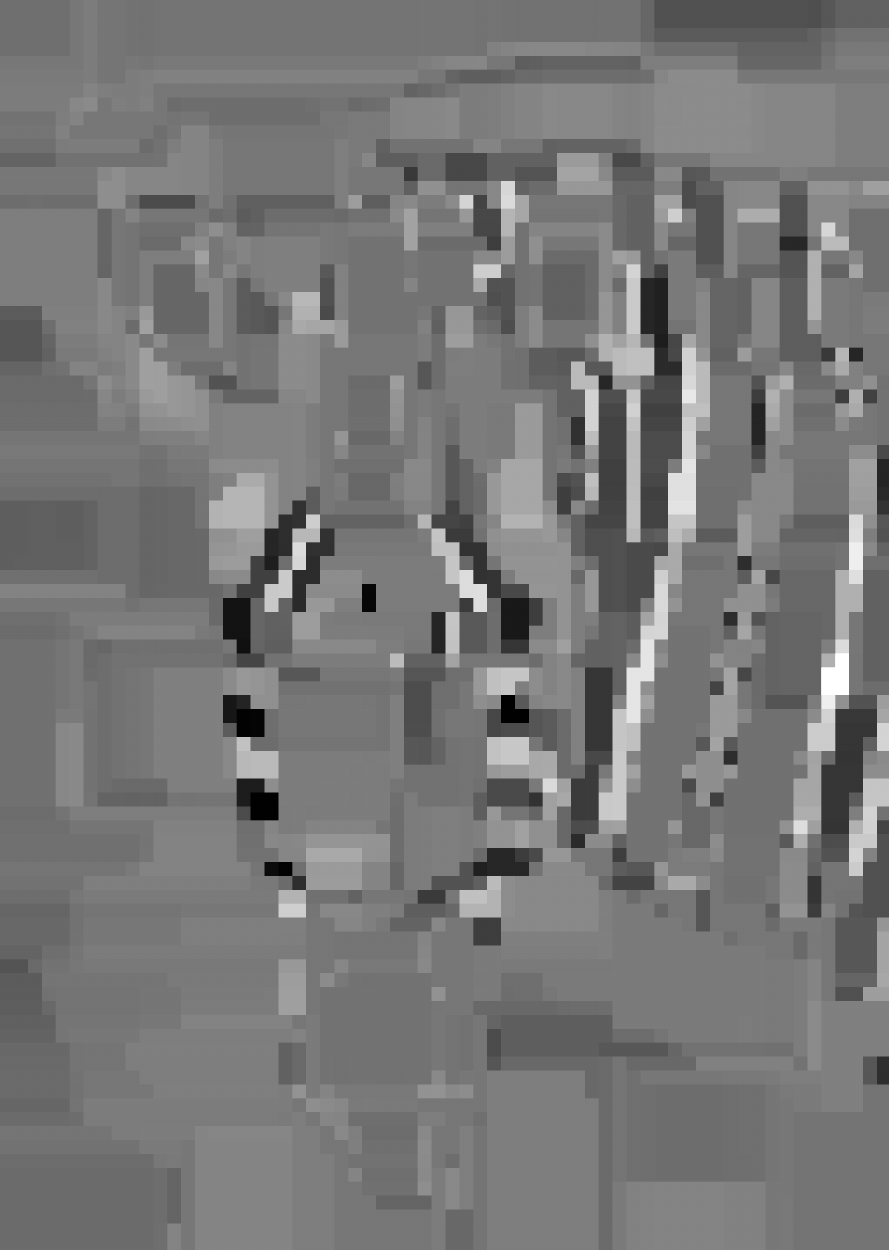}
\label{subfig:TV2Diso_KMD_F3}
}
\subfloat[$20\%-100\%$]
  {
\includegraphics[width=0.215\textwidth]{./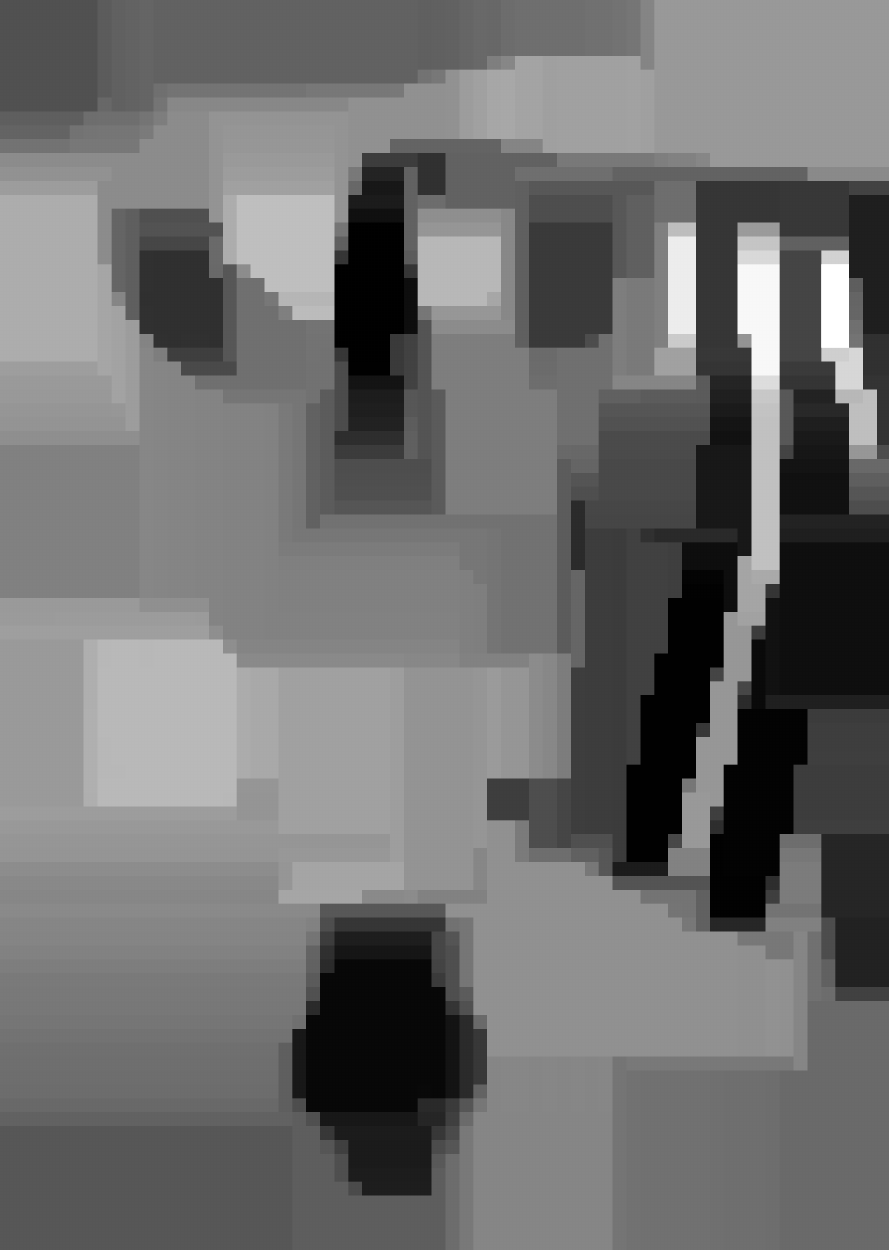}
\label{subfig:TV2Diso_KMD_F4}
}
\caption{Zebra image - decay profile decomposition with Koopman modes of anisotropic \ac{TV} flow, computed by the accelerated flow, Theorem \ref{theo:convergence} and Algo. \ref{algo:FastSubgradient}.}
\label{Fig:ZebraDecomposition_TViso_KMD}
\end{figure}

\section{Conclusion}\label{sec:con}
In this paper we thoroughly examined the \ac{DMD}\cite{schmid2010dynamic} algorithm and Koopman Theory as tools for spectral analysis and decomposition of the \ac{TV}-flow. We proposed the \acf{RDMD} adaptation as a means to overcome difficulties in \ac{DMD} application due to the linear-decay nature of the \ac{TV} flow. We have found exact relations between \ac{TV} spectral decomposition, the \acf{KMD} algorithm and \ac{DMD}. Due to the discontinuity of the dynamic, a decomposition based on the decay profile \cite{cohen2021examining} is called for. We applied this decomposition to separate the flow into Koopman modes, to compare them against the original \ac{TV} spectral decomposition. 

Since evolving \ac{TV}-flow is a slow process using optimization techniques, we have proposed a very fast method, based on simple updates of the subgradient. Finally, our accelerated algorithm was extended for solving the two-dimensional anisotropic TV-Flow.

{\bf{Acknowledgements.}}
This work was supported by the European Union’s Horizon 2020 research and innovation programme under the Marie Sk{\l}odowska-Curie grant agreement No. 777826 (NoMADS). GG acknowledges support by the Israel Science Foundation (Grant No.  534/19) and by the Ollendorff Minerva Center.


\bibliographystyle{spmpsci}      
\bibliography{smartPeople}   

\end{document}